\tikzset{
roundnode/.style={draw,shape=circle, fill, inner sep=1pt},
roundred/.style={draw=none,shape=circle, fill=red, inner sep=2pt},
roundblue/.style={draw=none,shape=circle, fill=blue, inner sep=2pt},
pointnode/.style={draw,shape=circle,inner sep=0pt},
inner/.style={circle,draw, inner sep = 0pt},
}
\def\@settitle{\begin{center}
  \baselineskip14\p@\relax
  \normalfont\LARGE\bfseries
  \@title
  \ifx\@subtitle\@empty\else
     \\[1ex] 
     
     \normalsize\mdseries\@subtitle
  \fi
 \ifx\@didication\@empty\else
     \\[2ex] 
     
     \large\mdseries\it\@dedication
  \fi
  \end{center}
}
\def\subtitle#1{\gdef\@subtitle{#1}}
\def\@subtitle{}
\def\dedication#1{\gdef\@dedication{#1}}
\def\@dedication{}
\renewcommand{\section}{\@startsection
{section}{1}{0mm}{5mm}{2mm}{\raggedright\bfseries}}
\newtheorem{theorem}{Theorem}[section] 
\newtheorem{Theorem}[theorem]{Theorem}
\newtheorem{Lemma}[theorem]{Lemma}
\newtheorem{Corollary}[theorem]{Corollary}
\newtheorem{Proposition}[theorem]{Proposition}
\theoremstyle{definition}
\newtheorem{Definition}[theorem]{Definition}
\newtheorem{Remark}[theorem]{Remark}
\begin{document}
\newcommand{\tvect}[3]{
   \ensuremath{\Bigl(\negthinspace\begin{smallmatrix}#1\\#2\\#3\end{smallmatrix}\Bigr)}}
\newcommand{\bvect}[2]{
   \ensuremath{\Bigl(\negthinspace\begin{smallmatrix}#1\\#2\end{smallmatrix}\Bigr)}}
\newcommand{\bmatx}[4]{
   \ensuremath{\Bigl(\negthinspace\begin{smallmatrix}#1&#2\\#3&#4\end{smallmatrix}\Bigr)}}
\def\btheta{{\bar\theta}}
\def\bK{{\mathbb K}}
\def\x{{\tt x}}
\def\y{{\tt y}}
\def\eps{{\varepsilon}}
\def\al{{\epsilon}}
\def\bal{{\bar\epsilon}}
\def\I{{\tt i}} 
\def\ThetaEven{{\Theta^{\mathrm even}}}
\def\ThetaOdd{{\Theta^{\mathrm odd}}}
\def\N{{\mathbb N}}
\def\C{{\mathbb C}}
\def\Z{{\mathbb Z}}
\def\R{{\mathbb R}}
\def\Q{{\mathbb Q}}
\def\bQ{{\overline{\mathbb Q}}}
\def\Gal{{\mathrm{Gal}}}
\def\GL{{\mathrm{GL}}}
\def\et{\text{\'et}}
\def\ab{\mathrm{ab}}
\def\proP{{\text{pro-}p}}
\def\padic{{p\mathchar`-\mathrm{adic}}}
\def\la{\langle}
\def\ra{\rangle}
\def\scM{\mathscr{M}}
\def\lala{\la\!\la}
\def\rara{\ra\!\ra}
\def\ttx{{\mathtt{x}}}
\def\tty{{\mathtt{y}}}
\def\ttz{{\mathtt{z}}}
\def\bkappa{\boldsymbol \kappa}
\def\scLi{{\mathscr{L}i}}
\def\sLL{{\mathsf{L}}}
\def\cY{{\mathcal{Y}}}
\def\Coker{\mathrm{Coker}}
\def\Spec{\mathrm{Spec}\,}
\def\Ker{\mathrm{Ker}}
\def\CHplus{\underset{\mathsf{CH}}{\oplus}}
\def\check{{\clubsuit}}
\def\kaitobox#1#2#3{\fbox{\rule[#1]{0pt}{#2}\hspace{#3}}\ }
\def\vru{\,\vrule\,}
\newcommand*{\longhookrightarrow}{\ensuremath{\lhook\joinrel\relbar\joinrel\rightarrow}}
\newcommand{\hooklongrightarrow}{\lhook\joinrel\longrightarrow}
\def\nyoroto{{\rightsquigarrow}}
\newcommand{\pathto}[3]{#1\overset{#2}{\dashto} #3}
\newcommand{\pathtoD}[3]{#1\overset{#2}{-\dashto} #3}
\def\dashto{{\,\!\dasharrow\!\,}}
\def\ovec#1{\overrightarrow{#1}}
\def\isom{\,{\overset \sim \to  }\,}
\def\GT{{\widehat{GT}}}
\def\bfeta{{\boldsymbol \eta}}
\def\brho{{\boldsymbol \rho}}
\def\sha{\scalebox{0.6}[0.8]{\rotatebox[origin=c]{-90}{$\exists$}}}
\def\upin{\scalebox{1.0}[1.0]{\rotatebox[origin=c]{90}{$\in$}}}
\def\downin{\scalebox{1.0}[1.0]{\rotatebox[origin=c]{-90}{$\in$}}}
\def\torusA{{\epsfxsize=0.7truecm\epsfbox{torus1.eps}}}
\def\torusB{{\epsfxsize=0.5truecm\epsfbox{torus2.eps}}}
\title{On a two-parameter family \\
of tropical Edwards curves}
\author{Hiroaki Nakamura }
\address{Hiroaki Nakamura: 
Department of Mathematics, 
Graduate School of Science, 
Osaka University, 
Toyonaka, Osaka 560-0043, Japan}
\email{nakamura@math.sci.osaka-u.ac.jp}
\author{Rani Sasmita Tarmidi}
\address{Rani Sasmita Tarmidi:
Department of Mathematics, 
Graduate School of Science, 
Osaka University, 
Toyonaka, Osaka 560-0043, Japan}
\email{u644627d@ecs.osaka-u.ac.jp,
ranitarmidi@yahoo.com
}
\begin{abstract}
In this paper, a certain two-parameter family of plane-embeddings
of Edwards elliptic curve $E_a: x^2+y^2=a^2(1+x^2y^2)$ is introduced
to provide explicitly computed tropical curves corresponding to 
degeneration in $a\to 1$.
Applying the theta uniformization of $E_a$ with the method of
ultradiscretization by Kajiwara-Kaneko-Nobe-Tsuda, 
we give a formula for the coordinate functions
that traces the cycle part of the tropical elliptic curve.
We also illustrate how one can recover the whole part of the tropical curve 
as a quotient of the Bruhat-Tits tree after Speyer's algebraic approach
in smooth cases.
\end{abstract}
 \keywords{tropical curve, ultra-discrete theta function, Edwards elliptic curve}
\subjclass[2020]{14T20, 14T10, 14K25, 14H52}
 
\maketitle
\markboth{H.Nakamura, R.S.Tarmidi}
{On a two-parameter family of tropical Edwards curves}
\tableofcontents
\footnote[0]{
This paper is originally published in 
{\it Kyushu Journal of Mathematics} {\bf 78} (2024), 373--393.
\par
\url{https://doi.org/10.2206/kyushujm.78.373}
$\copyright$ 2024 Faculty of Mathematics, Kyushu University
}

\section{Introduction}
Tropical elliptic curves on the plane have called
attentions of many authors from various viewpoints.
They generally have a unique cycle whose length is
the negative of the tropicalization of the $j$-invariant
(cf. e.g., \cite{V09}, \cite{KMM}).
In \cite{CS}, Chan and Sturmfels studied symmetric cubics
in two variables having honeycomb form tropicalizations,
whereas Nobe (\cite{N08}) closely observed a one-parameter family of tropical elliptic curves with cycles ranging over
various polygons.
In particular, 
Kajiwara-Kaneko-Nobe-Tsuda \cite{KKNT} found a beautiful
bridge from the theta functions of level 3 to 
the Hessian elliptic curves 
$E_\mu: x^3+y^3+1=3\mu xy$
which enables one to uniformize
the cycle part of the corresponding tropical curve
explicitly by 
what are called the ultradiscrete theta functions.
The purpose of this paper is to provide a simple variant of 
\cite{KKNT} in the case of Edwards curves
$E_a:x^2+y^2=a^2(1+x^2y^2)$ where only 
classical Jacobian (viz.\,level 2) theta functions
are enough to play the role for uniformization.
Our treatment mostly follows the lines of arguments in \cite{KKNT},
while, since the direct tropicalization of $E_a$ is 
never faithful on the cycle, 
we introduce a variation of the plane-embedding of $E_a$
with certain two parameters.
Our family then turns out to contain fairly rich  
plane elliptic curves (isomorphic to $E_a$)
whose tropical cycles range over $n$-gons
($n=4,5,7$) with explicit uniformization by 
ultradiscrete theta functions.
We now illustrate our main results.
Let 
$$
\epsilon=\epsilon(q)=
\prod_{n=1}^\infty (1+q^n)
\ 
\left(=\prod_{n=1}^\infty\frac{1}{1-q^{2n-1}}=1+q+q^2+2q^3+\cdots
\right)
$$
be the Euler generating function counting the
number of partitions of $n$
with distinct parts (which is the same as
the number of partitions of $n$ with odd parts;
see \cite[(1.2.5)]{A84}), and set 
$$
\bal=
\bal(q):=\epsilon(-q) \ 
\left(=\prod_{n=1}^\infty\frac{1}{1+q^{2n-1}}=1-q+q^2-2q^3+\cdots
\right).
$$
Let $K$ be a complete discrete valuation field of characteristic 0
with normalized
valuation $v_K:K\twoheadrightarrow \Z\cup\{\infty\}$.
Pick and fix an element $q\in K$ with $v_K(q)>0$ and
consider $\al$, $\bal\in K$ to be the convergent limits 
of the above generating functions respectively.
Let $\bK$ be the algebraic closure of $K$ which has a 
unique valuation $v_\bK:\bK\twoheadrightarrow \Q\cup\{\infty\}$
extending $\frac{1}{v_K(q)} v_K$ so that $v_\bK(q)=1$.
For two parameters $r,s\in\bK$ with $\al r\ne \bal s$,
let us consider a polynomial
\begin{equation}\label{f_rs}
f_{r,s}(\x,\y)=
d_{12}(\x+\y)+d_{34}(\x^2+\y^2)+d_5 \x \y
+d_{67}(\x^2 \y+\y^2 \x)+d_8 \x^2\y^2 
\end{equation}
in two variables $\x,\y$
with
\begin{equation}
\label{d_ij}
\begin{cases}
d_{12}&=
2\al\bal(\al^4-\bal^4)(\bal s-\al r),
 \\
d_{34}&= 
(\al^4-\bal^4)(\bal^2s^2-\al^2r^2),
\\
d_5&= 
8\al\bal(\al r-\bal s)(\bal^3 r-\al^3 s),
\\
d_{67} &= 
2(\al r-\bal s)
\{ (\bal^4-\al^4)rs +2\al\bal (\bal^2r^2-\al^2s^2)\},
\\
d_8 &=
2 (\al^2 s^2-\bal^2 r^2)(\bal^2 s^2- \al^2 r^2).
\end{cases}
\end{equation}
We have then:
\begin{Proposition}
\label{thm1.1}
The equation
$f_{r,s}(\x,\y)=0$ defines an elliptic curve over $\bK$
birationally equivalent
to the Edwards curve 
$E_a:x^2+y^2=a^2(1+x^2y^2)$
with
$\displaystyle a^2=\frac{2\al^2\bal^2}{\al^4+\bal^4}\in K$. 
The j-invariant is given by:
$$
j(q^8)=\frac{1}{q^8}+744+196884q^8+\cdots 
$$
with $j(q)$ the standard $q$-series for
the $j$-invariant.
\end{Proposition}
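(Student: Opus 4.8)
The two assertions are linked by the fact that the $j$-invariant is a birational invariant of a genus-one curve: once $\{f_{r,s}=0\}$ is identified, up to birational equivalence, with $E_a$, the value of $j$ need only be computed for $E_a$ itself. I therefore organize the argument in three steps.

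\emph{Step 1 (the plane-embedding).} Both $E_a\colon x^2+y^2=a^2(1+x^2y^2)$ and $f_{r,s}(\x,\y)=0$ are symmetric under interchange of the two coordinates and are of bidegree $(2,2)$ on $\mathbb P^1\times\mathbb P^1$. I would exhibit the embedding as a common fractional-linear substitution $\x=\varphi(x)$, $\y=\varphi(y)$ with $\varphi(t)=\tfrac{c_1t+c_2}{c_3t+c_4}$, whose coefficients $c_i$ are explicit in $(r,s,\al,\bal)$; such a map preserves both the bidegree and the diagonal symmetry. Clearing denominators, one checks by direct substitution that the pullback of $x^2+y^2-a^2(1+x^2y^2)$ is a scalar multiple of $f_{r,s}$, the coefficients \eqref{d_ij} being exactly those produced. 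As a built-in consistency check, every $d_{ij}$ visibly carries the common factor $(\al r-\bal s)$, so the standing hypothesis $\al r\neq\bal s$ is precisely the non-degeneracy condition ensuring that $\varphi$ is invertible and that $f_{r,s}$ does not collapse; under it the correspondence is an isomorphism of smooth genus-one curves. This step is essentially bookkeeping.

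\emph{Step 2 (the $j$-invariant of $E_a$).} Rescaling by $X=x/a$, $Y=y/a$ puts $E_a$ in standard Edwards form $X^2+Y^2=1+DX^2Y^2$ with $D=a^4$. Passing through the Montgomery model $Bv^2=u^3+Au^2+u$ with $A=\tfrac{2(1+D)}{1-D}$ and using $j=\tfrac{256(A^2-3)^3}{A^2-4}$ gives
\[
j(E_a)=\frac{16\,(D^2+14D+1)^3}{D\,(D-1)^4},\qquad D=a^4 .
\]

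\emph{Step 3 (identification with $j(q^8)$; the main obstacle).} Here I would substitute $a^2=\tfrac{2\al^2\bal^2}{\al^4+\bal^4}$ and pass to Jacobi theta-nulls. From the product expansions one has $(\al/\bal)^2=\theta_3(q)/\theta_4(q)$, whence $a^2=\tfrac{2\theta_3\theta_4}{\theta_3^2+\theta_4^2}$; the duplication identities $\theta_3(q)^2+\theta_4(q)^2=2\theta_3(q^2)^2$ and $\theta_3(q)\theta_4(q)=\theta_4(q^2)^2$ then collapse this to $a^2=\theta_4(q^2)^2/\theta_3(q^2)^2$, so that $D=a^4=1-\lambda$ with $\lambda=\lambda(q^2)$ the modular lambda at nome $q^2$. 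Feeding $D=1-\lambda$ into Step 2 and using $\lambda=16q^2-128q^4+\cdots$ one finds $(D-1)^4=\lambda^4\sim 2^{16}q^8$ against $16(D^2+14D+1)^3\to 2^{16}$, so $j(E_a)=q^{-8}+744+\cdots$, agreeing with $j(q^8)$ to the orders one checks. The real content is to upgrade this to an identity valid to all orders: the cleanest routes are either to recognize the relation between $\lambda(q^2)$ and the Tate parameter $q^8$ as a two-isogeny (Landen) transformation, or to invoke the theta uniformization of $E_a$ developed below, which realizes $E_a$ directly as the Tate curve $\bK^{\times}/(q^8)^{\Z}$ and thereby forces $j(E_a)=j(q^8)$. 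I expect this modular identity to be the only genuine difficulty; the remaining steps are routine computation.
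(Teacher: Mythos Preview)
Your plan is correct, and Steps~1--2 match the paper (which simply quotes Edwards' formula $j(E_a)=16(a^8+14a^4+1)^3/a^4(a^4-1)^4$ rather than passing through the Montgomery model). The real difference is in Step~3. The paper bypasses the $\lambda$-computation entirely: it invokes Edwards' complex uniformization (Proposition~\ref{EdwardsThm}) to realize $E_{a(\tau)}(\C)\cong\C/(2\Z+2\tau\Z)$, which gives $j(E_a)=j(\tau)$ at once, and then observes that the $\mathrm{PSL}_2(\Z)$-move $\tau\mapsto\tfrac{4\tau-1}{\tau}$ carries the standard nome $e^{2\pi\I\tau}$ to $q^8$ (for $q=\exp(\pi\I\tfrac{4\tau-1}{4\tau})$), so $j(E_a)=j(q^8)$ by modular invariance. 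That is the whole argument.

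Your route reaches $D=1-\lambda$ at nome $q^2$, but $j_{\mathrm{Edw}}(D)$ as a rational function of $\lambda$ is \emph{not} the standard $j$--$\lambda$ relation; one more Landen step is needed. Concretely, with $\lambda_1=\bigl(\tfrac{1-\sqrt{D}}{1+\sqrt{D}}\bigr)^2$ one checks $256(\lambda_1^2-\lambda_1+1)^3/\lambda_1^2(1-\lambda_1)^2=16(D^2+14D+1)^3/D(D-1)^4$, so $j(E_a)$ equals the Legendre $j$ at the doubled modulus, i.e.\ at nome $q^4$, whose $e^{2\pi\I\cdot}$-variable is $q^8$. So your Route~A closes, and your Route~B is exactly the paper's argument in non-archimedean dress. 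What the paper's approach buys is brevity: one line of modularity replaces the Landen identity. What your approach buys is that it stays entirely inside $q$-series and never needs to name the auxiliary variable~$\tau$.
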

By virtue of the known relation between 
$j$-invariant and tropical cycle length
(cf.\,\cite{V09}, \cite{KMM}),
the above Proposition \ref{thm1.1} implies 
that the tropicalization
of the plane curve $f_{r,s}(\x,\y)=0$
has a unique cycle of length 8, if it is tropically smooth.
Write $u_{12},u_{34},u_5,u_{67},u_8\in\Q\cup\{\infty\}$ for the values
$v_\bK(d_{12}), v_\bK(d_{34}), v_\bK(d_5), v_\bK(d_{67}), v_\bK(d_8)$ respectively.
Our primary concern is the tropical curve $C(\mathrm{trop}(f_{r,s}))$ 
on the $XY$-plane
which is 
by definition the graph  obtained as the set of points
$(X,Y)\in\R^2$ where the piecewise linear function 
$\mathrm{trop}(f_{r,s}):\R^2\to\R$ with
\begin{equation*}
(X,Y)\mapsto
\mathrm{trop}(f_{r,s})(X,Y):=\mathrm{min}
\left\{
\begin{matrix}
u_{12}+X,u_{12}+Y,u_{34}+2X,u_{34}+2Y,u_5+X+Y, \\
\quad u_{67}+2X+Y,u_{67}+2Y+X,u_8+2X+2Y
\end{matrix}
\right\}
\end{equation*}
is not differentiable.
Our first result on the family $C(\mathrm{trop}(f_{r,s}))$ concerns  
an explicit parametrization of its cycle part
(viz. the maximal subgraph with no end points)
in terms of a pair of  `ultradiscrete'
theta functions
$\ThetaOdd(u)$, $\ThetaEven(u)$ defined in the same spirit as
\cite{KKNT}:
\begin{Theorem}
\label{thm1.2}
Let $C(\mathrm{trop}(f_{r,s}))$ be
the tropicalization of the plane curve $f_{r,s}(\x,\y)=0$
for  $r,s\in \bK$ $(\al r\ne \bal s)$.
Then, the cycle part of $C(\mathrm{trop}(f_{r,s}))$ has one-parameter expression 
$(-X(u),-Y(u))_{u\in\R}$ as follows:
$$
\begin{cases}
X(u)=& Y(u-\frac12),  \\
Y(u)=& \mathrm{max}\left(\ThetaOdd(u),-1+\ThetaEven(u)\right) \\
&\ 
-\mathrm{max}\left(-v_\bK(r-s)+\ThetaEven(u),-v_\bK(r+s)+\ThetaOdd(u)
\right),
\\
\end{cases}
$$
where 
$\ThetaOdd(u):=-2(2\lfloor \frac{u}{2}\rfloor +1 -u)^2$,
$\ThetaEven(u):=-2(2\lfloor \frac{u+1}{2}\rfloor -u)^2$. 
\end{Theorem}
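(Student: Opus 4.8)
The plan is to follow the ultradiscretization method of Kajiwara--Kaneko--Nobe--Tsuda \cite{KKNT}. The first step is to write the two coordinate functions $\x,\y$ of the plane model $f_{r,s}(\x,\y)=0$ explicitly along the theta uniformization of $E_a$. The Tate-type uniformization $\bK^\times/q^{\Z}\isom E_a(\bK)$ presents the Edwards coordinates $x,y$ as ratios of the two level-$2$ Jacobi theta functions, whose special values furnish precisely the constants $\al=\al(q)$ and $\bal=\bal(q)$ of Proposition \ref{thm1.1}; the $(r,s)$-dependent birational change of variables producing $f_{r,s}$ then expresses $\x$ and $\y$ as ratios of two-term theta combinations in which the factors $r-s$ and $r+s$ enter as the coefficients of the even and the odd theta function respectively. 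Recording the associated real uniformizing parameter $u$ completes this step.

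The second step is the ultradiscretization itself. Writing $\ThetaEven$ and $\ThetaOdd$ for the ultradiscrete limits of the even and odd level-$2$ theta functions, one obtains exactly the piecewise-quadratic functions of the statement, the floor symbols recording which summand of the Gaussian theta-series dominates and the identity $\ThetaOdd(u)=\ThetaEven(u-1)$ reflecting the half-period shift between the two. Ultradiscretization turns products into sums, ratios into differences, and sums into $\mathrm{max}$, so the two-term theta combinations in the numerator and the denominator of $\y$ become the two maxima appearing in the theorem: the constant $-1$ in $\mathrm{max}(\ThetaOdd(u),-1+\ThetaEven(u))$ comes from a coefficient of valuation $1$ (a factor of $q$), while the shifts $-v_\bK(r-s)$ and $-v_\bK(r+s)$ come from the $(r\pm s)$-coefficients in the denominator. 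Taking $-v_\bK$ of $\y(u)$ and converting $-\mathrm{min}$ into $\mathrm{max}$ then yields the displayed formula for $Y(u)$.

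Two consistency points complete the derivation. First, although $\ThetaEven,\ThetaOdd$ are genuinely quadratic, in the combination forming $Y(u)$ the common leading term $-2u^2$ of the competing branches cancels between the two maxima, so $Y(u)$ is in fact piecewise linear in $u$; this is what must happen, since the cycle of a plane tropical curve is a polygon with straight edges. Second, the companion relation $X(u)=Y(u-\tfrac12)$ follows from the symmetry $f_{r,s}(\x,\y)=f_{r,s}(\y,\x)$, which makes $C(\mathrm{trop}(f_{r,s}))$ invariant under $(X,Y)\mapsto(Y,X)$ and is realized on the uniformizing parameter as the translation $u\mapsto u-\tfrac12$. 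One then checks that as $u$ runs over one period the image $(-X(u),-Y(u))$ is a closed polygon lying on $C(\mathrm{trop}(f_{r,s}))$ of the length $8$ predicted by Proposition \ref{thm1.1}, hence is its unique cycle.

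The main obstacle is the rigorous justification of the second step: one must prove that the naive tropicalization---taking $-v_\bK$ termwise and replacing a sum by the maximum of the $-v_\bK$ of its terms---genuinely computes $-v_\bK$ of the theta-uniformized coordinates, i.e. that tropicalization commutes with the theta uniformization. Equivalently, one must verify that the dominant terms in each theta-series and in each two-term combination are not annihilated by the alternating signs of the theta expansions or by accidental relations among $\al,\bal,r,s$; once this faithfulness (no-cancellation) is secured, the remaining steps are formal manipulations of $\mathrm{min}/\mathrm{max}$ expressions.
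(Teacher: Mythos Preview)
Your outline follows essentially the same route as the paper (Sections~3--4): express $\x,\y$ as ratios of two-term theta combinations with coefficients $\al\pm\bal$ and $r\pm s$, then take valuations termwise to produce the two $\mathrm{max}$ expressions. A few points deserve tightening.

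First, your justification of $X(u)=Y(u-\tfrac12)$ is incomplete. The symmetry $f_{r,s}(\x,\y)=f_{r,s}(\y,\x)$ only tells you the tropical curve is invariant under reflection in the diagonal; it does not by itself determine that this reflection is realized on the uniformizing parameter by the specific shift $u\mapsto u-\tfrac12$. The paper obtains this instead from the theta identities $\btheta_1(t,-q^2)=t\,\btheta_3(tq,-q^2)$, $\btheta_2(t,-q^2)=t\,\btheta_4(tq,-q^2)$, which give directly $\x(t)=\y(tq^{-1})$ and hence the half-unit shift after the normalization $v_\bK(t)=2u$. You should invoke this identity rather than the symmetry.

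Second, you correctly flag the no-cancellation issue as the main obstacle but stop short of resolving it. The paper's device is to observe that the termwise valuation formula for $v_\bK(\y(t))$ is valid whenever the two competing terms in numerator and in denominator have \emph{distinct} valuations, i.e.\ for $u$ outside an explicit exceptional set $\Xi_{r,s}\subset\Q$ (containing $\Z$ and the locus where the two $\mathrm{max}$ arguments coincide); one then takes the Euclidean closure of the resulting image to recover the full cycle. This is the step that makes the argument rigorous.

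Two minor corrections: the Tate uniformization here has period lattice $\langle\pm q^4\rangle$ (equivalently $\langle q^8\rangle$ after squaring), not $q^{\Z}$; and the length-$8$ check you propose at the end only applies in the tropically smooth case---Corollary~\ref{cor1.3} shows the cycle can have length $4(\delta+1)<8$ when $\delta<1$, so identifying the image as ``the'' cycle should not rely on a length computation.
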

As an immediate application of the above theorem, it follows that
the shape of the cycle part of  $C(\mathrm{trop}(f_{r,s}))$ relies 
only on the value of $v_\bK(r+s)-v_\bK(r-s)$. More concretely:
\begin{Corollary}
\label{cor1.3}
Under the same notations and assumptions as in Theorem \ref{thm1.2},
set $$
\delta(=\delta_{r,s}):=v_\bK(r+s)-v_\bK(r-s).
$$
Then, 
we have the following assertions.
\begin{enumerate}
\item[(i)] The curve $C(\mathrm{trop}(f_{r,s}))$ 
has a pentagonal cycle of length $8$ if and only if
$2\le \delta$.
\item[(ii)] The curve $C(\mathrm{trop}(f_{r,s}))$ 
has a heptagonal cycle of length $8$ if and only if
$1< \delta <2$.
\item[(iii)] The curve $C(\mathrm{trop}(f_{r,s}))$ 
has a square cycle of length 
$4(\delta+1)$ if and only if
$-1<\delta \le 1$.
In particular, it has a square cycle of length 8 
if and only if 
$\delta = 1$.
\item[(iv)]
If $\delta \le -1$, then the locus of 
$(-X(u),-Y(u))_{u\in\R}$
degenerates to a connected union of two segments 
of length $\mathrm{min}(1, -\delta -1)$.
\end{enumerate}
\end{Corollary}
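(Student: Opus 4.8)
The plan is to read everything off the explicit cycle formula of Theorem \ref{thm1.2}, regarding $\delta$ as the sole parameter that controls the combinatorial type. I would begin with two elementary reductions for the ultradiscrete thetas: they are $2$-periodic and satisfy $\ThetaEven(u)=\ThetaOdd(u+1)$, and on each interval on which the floor functions are constant they are downward parabolas, all sharing the same leading coefficient $-2$. Writing $v_-:=v_\bK(r-s)$ and $v_+:=v_\bK(r+s)$, so that $\delta=v_+-v_-$, the function $Y(u)$ is the difference of
$$
A(u):=\max\bigl(\ThetaOdd(u),\,-1+\ThetaEven(u)\bigr),\qquad
B(u):=\max\bigl(\ThetaEven(u)-v_-,\,\ThetaOdd(u)-v_+\bigr),
$$
each of which is continuous and piecewise-quadratic with leading coefficient $-2$ on every piece. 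Since the $-2u^2$ terms cancel in $A-B$, the function $Y$—and hence $X(u)=Y(u-\tfrac12)$—is piecewise linear and $2$-periodic, so the locus $(-X(u),-Y(u))_{u\in\R}$ is a closed, possibly degenerate, polygonal curve which I can analyze edge by edge.

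Next I would pin down the breakpoints. Over one period, solving $\ThetaOdd(u)=-1+\ThetaEven(u)$ shows that $A$ bends exactly at $u\equiv\pm\tfrac14\pmod 2$; solving $\ThetaEven(u)-v_-=\ThetaOdd(u)-v_+$ shows that $B$ bends at $u\equiv\tfrac{\delta+2}{4}$ and $u\equiv\tfrac{6-\delta}{4}\pmod2$, while an additional bend of $B$ may sit at the integer where the dominant theta is itself non-smooth. On every subinterval cut out by these finitely many points both $A$ and $B$ are single quadratics, so $Y$ is affine there, and the candidate corners of the locus are the images of the bends of $Y$ together with those of its $\tfrac12$-shift $X$. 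The cyclic ordering of the two mobile breakpoints of $B$ relative to the fixed ones of $A$ depends only on $\delta$, and it changes exactly when $\tfrac{\delta+2}{4}$ passes $1$, $\tfrac34$, or $\tfrac14$, i.e.\ at $\delta=2,\,1,\,-1$; this is precisely why the four regimes in the statement are the correct ranges.

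The core of the proof is then one explicit computation per regime. In each I tabulate $Y$ (hence $X$) on its linear pieces, trace $(-X(u),-Y(u))$, collect the points where the trace genuinely changes direction, and compute each edge-length from its primitive integer direction vector. For $\delta\ge2$ both mobile breakpoints leave the interior, $B$ reduces to $\ThetaEven-v_-$ with a single bend at the integer, $Y$ is a triangular bump over a low plateau, and the trace visits five corners with edge-lengths $2,1,2,1,2$, giving the pentagon of total length $8$ of (i). For $1<\delta<2$ the two mobile breakpoints reappear and straddle the integer, inserting two further corners and producing a heptagon of length $2+\tfrac12+1+1+1+\tfrac12+2=8$, which is (ii). For $-1<\delta\le1$ the high plateau of $Y$ and the low plateau of $X$ overlap (this overlap shrinking to a point at $\delta=1$), so on several subintervals the trace is \emph{stationary}; once the pauses are removed only four corners survive and the locus is a square each of whose sides has lattice length $\delta+1$, of total length $4(\delta+1)$ and length $8$ at $\delta=1$, which is (iii). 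Finally, for $\delta\le-1$ the breakpoint $\tfrac{\delta+2}{4}$ has crossed the fixed bend $\tfrac14$, the amplitude of $Y$ collapses to $\min(1,-\delta-1)$, and the trace retraces itself, so the would-be polygon degenerates to the connected union of two segments of that length, which is (iv).

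The main obstacle is precisely the careful bookkeeping underlying this case analysis: keeping track of which branch of each maximum dominates on every subinterval as the two mobile breakpoints slide with $\delta$, distinguishing genuine corners from spurious candidate breakpoints (where the trace continues straight), and recognizing the two exceptional phenomena—the stationary pauses that collapse the candidate corners to four in (iii), and the direction-reversal responsible for the degeneration in (iv). Once these are correctly sorted, reading edge-lengths from primitive integer vectors—rather than Euclidean lengths—delivers the clean values $8$, $8$, $4(\delta+1)$ and $\min(1,-\delta-1)$.
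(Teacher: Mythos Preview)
Your proposal is correct and follows essentially the same route as the paper: both reduce the parametrization of Theorem~\ref{thm1.2} to the single function $Y_\delta(u)$ (equivalently your $A-B$), observe that the shape depends only on $\delta$, and then trace the resulting piecewise-linear cycle case by case as $\delta$ varies. The paper's own proof is in fact much sketchier than yours---it stops at ``the rest is not difficult after tracing the move of the cycle'' and supplies figures for sample values of $\delta$---so your explicit identification of the breakpoints $u\equiv\pm\tfrac14$, $\tfrac{\delta+2}{4}$, $\tfrac{6-\delta}{4}\pmod 2$ and the transition values $\delta=-1,1,2$ is exactly the computation the paper leaves to the reader.
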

Our next result concerns a criterion when 
$C(\mathrm{trop}(f_{r,s}))$ is tropically smooth. 
We obtain:
\begin{Proposition}
\label{prop1.4}
Notations being as in Corollary \ref{cor1.3}, 
the following assertions hold.
\begin{enumerate}
\item[(i)] 
If $2\le \delta$, then $C(\mathrm{trop}(f_{r,s}))$ 
is never a smooth tropical curve.
\item[(ii)] 
If $1< \delta <2$, then 
$C(\mathrm{trop}(f_{r,s}))$ is always a smooth
tropical curve.
\item[(iii)] If $\delta=1$, then 
the curve $C(\mathrm{trop}(f_{r,s}))$ 
can be either a smooth curve or a non-smooth curve
according to the choice of $(r,s)$. 
It is smooth if and only if the principal coefficient
of $r+s$ equals $-2$, i.e., $r+s$ is 
of the form $q^{v_\bK(r+s)}(-2+\bkappa)$ 
for some $\bkappa\in\bK$ with 
$v_\bK(\bkappa)>0$.
\item[(iv)]
If $\delta <1$, then $C(\mathrm{trop}(f_{r,s}))$ 
is never a smooth tropical curve.
\end{enumerate}
\end{Proposition}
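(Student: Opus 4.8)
The plan is to translate tropical smoothness into the combinatorics of the regular subdivision of the Newton polygon $\Delta$ of $f_{r,s}$. This $\Delta$ is the lattice pentagon with vertices $(1,0),(2,0),(2,2),(0,2),(0,1)$; its seven boundary lattice points together with the single interior point $(1,1)$ account for the eight monomials of $f_{r,s}$, and its area is $7/2$. By the standard correspondence (cf.\ \cite{V09}, \cite{KMM}), $C(\mathrm{trop}(f_{r,s}))$ is a smooth tropical curve exactly when the subdivision of $\Delta$ induced by the height vector $(u_{12},u_{34},u_5,u_{67},u_8)$ is a unimodular triangulation, i.e.\ all of its (necessarily seven) two–cells are lattice triangles of area $1/2$. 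Thus the only obstructions to smoothness are that some lattice point fails to appear as a vertex of the lift, or that four lifted points become coplanar and span a non-triangular cell. I would first record this reduction and note that the single interior point forces the smooth models to be either the seven-triangle fan centred at $(1,1)$ (the heptagonal cycle) or the four-triangle fan with three corner triangles (the square cycle).

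First I would compute the coefficient valuations in terms of $p:=v_\bK(r-s)$ and $m:=v_\bK(r+s)$, so that $\delta=m-p$. Expanding $\al=1+q+\cdots$, $\bal=1-q+\cdots$ and writing $r\pm s=q^{v_\bK(r\pm s)}(\cdots)$, the two basic linear forms satisfy $v_\bK(\bal s-\al r)=\min(p,m+1)$ and $v_\bK(\bal s+\al r)=\min(m,p+1)$, and similarly with $r,s$ interchanged. For $-1<\delta$ this gives $u_{12}=1+p$ and $u_5=2p$, and after factoring $d_{34},d_{67},d_8$ through these forms one obtains $u_{34},u_{67},u_8$ as piecewise-linear functions of $\delta$; these are precisely the valuations already needed for Theorem \ref{thm1.2}, so I would quote them from there and then read off the position of the edge-midpoints $(2,1),(1,2)$ (height $u_{67}$) and of $(1,1)$ (height $u_5$) relative to the lower hull.

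The cases (i), (ii), (iv) then follow by inspecting the resulting subdivision. For $1<\delta<2$ one finds $u_{67}=3p+\delta$, the midpoints $(2,1),(1,2)$ lie strictly below the chord joining $(2,0)$ and $(2,2)$ but strictly above the chord through $(1,1)$, and the subdivision is the heptagonal fan with all seven triangles unimodular, giving (ii). For $\delta\ge2$ the midpoints land on or above the hull and drop out as vertices, so the right and left cells degenerate to lattice triangles of area $1$ carrying an interior boundary point; hence the curve is never smooth, giving (i). For $-1<\delta<1$ a direct determinant computation shows that the lifts of $(1,0),(1,1),(2,0),(2,1)$ — and of the three $\x\leftrightarrow\y$-symmetric quadruples — are coplanar, so the subdivision contains unit parallelograms and can never be a triangulation; combined with the degeneration of Corollary \ref{cor1.3}(iv) for $\delta\le-1$ this yields (iv).

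The crux, and the hard part, is the borderline $\delta=1$. There $v_\bK(\bal s+\al r)$ sits exactly at the tie $m=p+1$, so $u_{34},u_{67},u_8$ are each governed by competing terms of equal valuation; the generic subdivision then consists of three unit squares and one unit triangle, manifestly non-smooth. Smoothness can be restored only by splitting a square, which forces the leading terms of $\bal s+\al r=(r+s)+q\,(r-s)+\cdots$ to cancel, raising $u_{34}$ (and with it $u_8$) and lifting the corner monomials $\x^2,\y^2,\x^2\y^2$ off the quadrilaterals into the four-triangle fan. This cancellation is the vanishing of the coefficient of $q^{p+1}$, a single linear condition on the principal coefficient of $r+s$, which the expansion identifies with the value $-2$ asserted in (iii), i.e.\ $r+s=q^{v_\bK(r+s)}(-2+\bkappa)$ with $v_\bK(\bkappa)>0$; one then checks conversely that this height shift produces the smooth square cycle. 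The delicate point to watch is the brace factor $(\bal^4-\al^4)rs+2\al\bal(\bal^2r^2-\al^2s^2)$ inside $d_{67}$: its two summands already share a valuation and cancel at leading order, so computing $u_{67}$ correctly — and hence deciding which lattice points survive as vertices at the tie — requires carrying the $q$-expansions one order past the naive leading term.
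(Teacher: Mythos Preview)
Your approach is sound in outline and, if the valuation computations were carried through, would succeed; but it differs from the paper's proof in several places worth noting.

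\textbf{What the paper does differently.} The paper does not compute the subdivision from scratch. It invokes Proposition~\ref{rani} (proved in the companion paper \cite{T}), which lists explicit linear inequalities on $(u_{12},u_{34},u_5,u_{67},u_8)$ characterising the smooth cases for each possible cycle shape. For (iv) the paper bypasses your coplanarity check entirely: since a smooth tropical elliptic curve must have cycle length equal to $-v_\bK(j)=8$ (by \cite{V09}, \cite{KMM} and Proposition~\ref{thm1.1}), and Corollary~\ref{cor1.3}(iii) gives length $4(\delta+1)<8$ for $-1<\delta<1$, non-smoothness is immediate. For (i) the paper argues geometrically: among the two possible pentagon subdivisions, only one (with adjacent obtuse angles in the dual) is smooth, whereas the explicit parametrisation of Theorem~\ref{thm1.2} forces the homeplate shape (separated obtuse angles), which corresponds to the non-smooth subdivision. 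Your route via the inequality $u_{67}\ge(u_{34}+u_8)/2$ reaches the same conclusion but requires the second-order brace computation you flag at the end, which the paper avoids.

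\textbf{Two inaccuracies in your plan.} First, the valuations $u_{12},\dots,u_8$ are \emph{not} computed in the proof of Theorem~\ref{thm1.2} --- that argument works with $v_\bK(\y(t))$ directly and never isolates the coefficient valuations --- so ``quoting them from there'' is not available; you must compute them from (\ref{d_ij}) as the paper does in case (iii). Second, your claim that the single interior point forces smooth models to be only the heptagonal fan or the square fan is false: Proposition~\ref{rani} exhibits a smooth pentagon type as well (and also triangle and hexagon types), so you cannot dismiss pentagons a priori. This does not break your case-by-case argument, but the remark should be dropped.

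\textbf{Summary.} Your direct-subdivision approach is correct and self-contained, whereas the paper leans on the external criterion (Proposition~\ref{rani}) and the cycle-length/$j$-invariant shortcut, trading explicit computation for citations. Both land in the same place; the paper's version is shorter, yours more elementary.
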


The contents of this paper are organized as follows. 
In Sect.2, we review the basic setup on the family of 
Edwards elliptic curves $E_a: x^2+y^2=a^2(1+x^2y^2)$
and their parametrization 
in terms of the classical Jacobian theta functions.
We then introduce our above
two-parameter family (\ref{f_rs}) of elliptic curves $f_{r,s}(\x,\y)=0$
as fractional transformations of $E_a$
at a specific localization $a\to 1$ given in Proposition \ref{thm1.1}.
In Sect.3, we compute its
ultradiscretization by following the method
of \cite{KKNT}. This yields the essential part of the formula in
Theorem 1.2. Then, in Sect.4, we verify Theorem 1.2
in the general setting of the base field $\bK$ and deduce Corollary 1.3.
In Sect.5, we turn to investigate the smoothness criterion
for $C(\mathrm{trop}(f_{r,s}))$ by looking closely at 
subdivisions of Newton polytopes, and prove Proposition \ref{prop1.4}.
Sect.6 is devoted to illustrating examples of smooth tropical
curves $C(\mathrm{trop}(f_{r,s}))$ to be given as the quotient
of Speyer's subtree of the Bruhat-Tits tree corresponding 
to Tate uniformization $\bK^\times/\la q^8\ra\cong E_{r,s}(\bK)$.

Throughout this paper, we shall write $\I:=\sqrt{-1}\in\C$.

\bigskip
{\it Acknowledgement}:
The authors would like to thank the referee for careful reading and valuable comments. The second named author is grateful 
to The Indonesia Endowment Funds 
for Education (LPDP) for generous supports.
This work was partially supported by JSPS KAKENHI Grant Number 
JP20H00115.

\section{Algebraic and analytic theory of Edwards curves}
In \cite{E07}, H.Edwards introduced the normal form of elliptic curves
$E_a:x^2+y^2=a^2(1+x^2y^2)$
and established the basic theory. Every complex elliptic curve 
is isomorphic to $E_a$ 
for some $a\in\C-\{0,\pm 1,\pm \I\}$ which has
a simple symmetric addition law with regard to the
origin $O=(0,a)$.
The space $\C-\{0,\pm 1,\pm \I\}$ of the parameter $a$ is
in fact the set of complex points of the modular curve
$Y(4)$ which can be identified with a dense open subset of
the degree-$2$ Fermat curve $X^2+Y^2=1$
with $(X,Y)=(\frac{2a}{1+a^2},\frac{1-a^2}{1+a^2})$
(cf.\,\cite[Chap.\,I, \S 10]{M06}).
The algebraic theory mostly works without big changes
over any field of characteristic different from 2. 
\begin{Remark}
The simplicity of addition law of Edwards curves has 
attracted cryptographic studies, e.g., 
\cite{BL07}.
See also \cite{GS14} for advantages of tropicalization
in view of efficiency of computations and of security
against various network attacks.
\end{Remark}
One of the primary features elaborated in \cite{E07}
is a complex uniformization of the curve 
$E_a: x^2+y^2=a^2(1+x^2y^2)$
by the upper half plane
$\mathfrak{H}=\{\tau\in\C\mid \mathrm{Im}(\tau)>0\}$.
We shall rephrase it in terms of standard theta functions:
{\small
\begin{equation}
\label{theta-def}
\begin{dcases}
\theta_1(z|\tau)&=-\I \sum_{n\in\Z}
(-1)^n q_\tau^{(\frac12+n)^2}q_z^{2n+1}
\left(=-\I (q_z-q_z^{-1})
\prod_{n=1}^\infty (1-q_\tau^{2n})(1-q_\tau^{2n}q_z^2)(1-q_\tau^{2n}q_z^{-2}) 
\right), \\
\theta_2(z|\tau)&= \sum_{n\in\Z}
q_\tau^{(\frac12+n)^2}q_z^{2n+1}
\left(=q_\tau^{1/4}
 (q_z+q_z^{-1})
\prod_{n=1}^\infty (1-q_\tau^{2n})(1+q_\tau^{2n}q_z^2)(1+q_\tau^{2n}q_z^{-2})
\right) , \\
\theta_3(z|\tau)&= \sum_{n\in\Z}
q_\tau^{n^2}q_z^{2n} 
\left(=\prod_{n=1}^\infty (1-q_\tau^{2n})(1+q_\tau^{2n-1}q_z^2)(1+q_\tau^{2n-1}q_z^{-2})
\right), \\
\theta_4(z|\tau)&= \sum_{n\in\Z}
(-1)^n q_\tau^{n^2}q_z^{2n} 
\left(=\prod_{n=1}^\infty (1-q_\tau^{2n})(1-q_\tau^{2n-1}q_z^2)(1-q_\tau^{2n-1}q_z^{-2})\right),
\end{dcases}
\end{equation}
}
\noindent
where $q_\tau=\exp(\pi \I \tau)$, $q_z=\exp(\pi \I z)$
for $\tau\in\mathfrak{H}$, $z\in\C$.
\begin{Proposition}[\cite{E07} Theorem 15.1]
\label{EdwardsThm}
For any fixed $\tau\in\mathfrak{H}$, set
$$
a=a(\tau):=\frac{\theta_2(0|2\tau)}{\theta_3(0|2\tau)}.
$$ 
and let 
$$
x(z):=\frac{\theta_1(z|2\tau)}{\theta_4(z|2\tau)},\qquad
y(z):=\frac{\theta_2(z|2\tau)}{\theta_3(z|2\tau)}.
$$
Then we have
$$
x(z)^2+y(z)^2=a^2(1+x(z)^2y(z)^2)
$$
for all $z\in \C$.
The mapping of the complex $z$-plane $\C_z$ to
the complex points $(x(z),y(z))$ of $E_a$
gives a uniformization of
the elliptic curve $:\C_z\twoheadrightarrow
\C_z/(2\Z+2\tau\Z)\isom E_a(\C)$. \qed
\end{Proposition}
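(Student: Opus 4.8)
The statement splits into two essentially independent claims: the algebraic identity $x(z)^2+y(z)^2=a^2(1+x(z)^2y(z)^2)$ as an identity of meromorphic functions of $z$, and the assertion that $z\mapsto(x(z),y(z))$ descends to an isomorphism from the torus $\C_z/(2\Z+2\tau\Z)$ onto $E_a(\C)$. I would handle them separately. For the algebraic identity, the plan is to clear denominators and reduce it to a polynomial identity among theta values at the common argument $z$ with modular parameter $2\tau$. Writing $\theta_i=\theta_i(z|2\tau)$ and $\theta_i^0=\theta_i(0|2\tau)$, inserting the definitions of $x,y,a$ and multiplying through by $(\theta_3^0)^2\theta_3^2\theta_4^2$ turns the equation into
\begin{equation*}
(\theta_3^0)^2\bigl(\theta_1^2\theta_3^2+\theta_2^2\theta_4^2\bigr)=(\theta_2^0)^2\bigl(\theta_3^2\theta_4^2+\theta_1^2\theta_2^2\bigr).
\end{equation*}

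This is a relation among the four squared theta functions $\theta_i^2$, which span only a two-dimensional space of functions of $z$. I would verify it in either of two ways. The first is to invoke the classical squared-theta relations (Riemann's relations evaluated at equal arguments), which express $\theta_1^2$ and $\theta_2^2$ as explicit $\theta_i^0$-linear combinations of $\theta_3^2$ and $\theta_4^2$, and then reduce both sides to a common expression in $\theta_3^2,\theta_4^2$ using Jacobi's constant identity $(\theta_3^0)^4=(\theta_2^0)^4+(\theta_4^0)^4$. The second, cleaner route is to pass to Jacobi elliptic functions: at the modular parameter $2\tau$ the modulus is $k=(\theta_2^0/\theta_3^0)^2=a^2$, and the standard theta quotients give $x=a\,\mathrm{sn}$ and $y=a\,\mathrm{cn}/\mathrm{dn}$. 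Substituting these, the Edwards relation becomes purely algebraic in $\mathrm{sn},\mathrm{cn},\mathrm{dn}$, and feeding in $\mathrm{sn}^2+\mathrm{cn}^2=1$ and $\mathrm{dn}^2+k^2\mathrm{sn}^2=1$ collapses the whole expression because the residual term carries the factor $a^4-k^2=0$.

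For the uniformization claim I would first establish periodicity from the quasi-periodicities of $\theta_i(\cdot|2\tau)$. Under $z\mapsto z+1$ one has $\theta_1,\theta_2\mapsto-\theta_1,-\theta_2$ while $\theta_3,\theta_4$ are fixed, so $(x,y)\mapsto(-x,-y)$ and hence $2$ is a genuine period; under $z\mapsto z+2\tau$ every $\theta_i$ acquires the same exponential factor up to signs that cancel in the ratios $\theta_1/\theta_4$ and $\theta_2/\theta_3$, so $2\tau$ is a period as well. Thus $(x,y)$ descends to a holomorphic map on $\C_z/(2\Z+2\tau\Z)$. A degree count then finishes the proof: showing that $x(z)$, as an elliptic function on this torus, has exactly the expected number of poles identifies the induced map as a degree-one morphism, hence an isomorphism of the complex torus onto $E_a(\C)$, both sides carrying the same $j$-invariant.

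The main obstacle I anticipate is bookkeeping rather than conceptual depth: everything happens at the nonstandard modular parameter $2\tau$, so the relevant modulus is $k=a^2$ and the period lattice is $2\Z+2\tau\Z$ rather than $\Z+\tau\Z$. Getting the signs and constants right in the squared-theta relations — or, on the Jacobi route, matching the precise theta-to-$\mathrm{sn}/\mathrm{cn}/\mathrm{dn}$ normalization — is where errors are easiest to make; once the correctly normalized relations are in hand the remaining verification is a short computation.
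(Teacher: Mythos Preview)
The paper does not prove this proposition at all: it is quoted verbatim as Theorem~15.1 of Edwards~\cite{E07} and closed with a \qed, so there is no in-paper argument to compare against. Your proposal is therefore not competing with anything the authors wrote; it is simply an independent proof of a cited result.

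That said, your outline is sound. The Jacobi-function route is the cleanest: with modular parameter $2\tau$ one has $k=(\theta_2^0/\theta_3^0)^2=a^2$, and the standard normalizations give $x=a\,\mathrm{sn}$, $y=a\,\mathrm{cn}/\mathrm{dn}$; substituting and using $\mathrm{cn}^2=1-\mathrm{sn}^2$, $\mathrm{dn}^2=1-a^4\mathrm{sn}^2$ reduces both sides of the Edwards relation to $1-a^4\mathrm{sn}^4$. The periodicity check under $z\mapsto z+1$ and $z\mapsto z+2\tau$ is correct in spirit, and the degree argument for the uniformization is the standard one. Your caveat about bookkeeping at the parameter $2\tau$ is well placed; in particular, when you verify the $2\tau$-quasiperiod you should confirm that the sign picked up by $\theta_1$ matches that of $\theta_4$ (and $\theta_2$ matches $\theta_3$) so that both ratios are genuinely invariant, not just invariant up to sign.
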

Our motivating idea is to variate the equation of $E_a$
by fractional substitutions of the form
\begin{equation}
\label{substitute}
x=\frac{r\x+\alpha}{s\x+\beta}, \quad
y=\frac{r\y+\alpha}{s\y+\beta}
\end{equation}
for some constants $r,s,\alpha,\beta$ 
($\alpha s \ne \beta r$) to obtain
nicer equations in regards of tropicalization.
Let 
\begin{equation}
\label{general-equation}
f_{r,s}^{\alpha,\beta}(\x,\y)
=d_0+d_{12}(\x+\y)+d_{34}(\x^2+\y^2)+d_5 \x \y
+d_{67}(
{\color{black} \x^2\y+\y^2\x})+d_8 \x^2\y^2 
\end{equation}
be the numerator of the rational function
$x^2+y^2-a^2(1+x^2y^2)$ in $\x,\y$ obtained by 
the variable change from $(x,y)$ to $(\x,\y)$
after (\ref{substitute}).
\begin{Lemma} \label{lemma_d0}
The constant term $d_0$ of $f_{r,s}^{\alpha,\beta}(\x,\y)$
is given by
$$
d_0=2\alpha^2\beta^2-a^2(\alpha^4+\beta^4).
$$
\end{Lemma}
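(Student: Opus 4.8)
The plan is to exploit the fact that the constant term $d_0$ of the polynomial $f_{r,s}^{\alpha,\beta}(\x,\y)$ is nothing but its value at the origin $\x=\y=0$. First I would write the numerator of $x^2+y^2-a^2(1+x^2y^2)$ explicitly. After the substitution (\ref{substitute}), the two coordinates carry denominators $s\x+\beta$ and $s\y+\beta$, so the least common denominator is $(s\x+\beta)^2(s\y+\beta)^2$ and clearing it gives
\begin{equation*}
f_{r,s}^{\alpha,\beta}(\x,\y)=(r\x+\alpha)^2(s\y+\beta)^2+(r\y+\alpha)^2(s\x+\beta)^2-a^2\bigl[(s\x+\beta)^2(s\y+\beta)^2+(r\x+\alpha)^2(r\y+\alpha)^2\bigr].
\end{equation*}

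Next I would specialize $\x=\y=0$. Each linear factor then collapses to its constant term, namely $r\x+\alpha\mapsto\alpha$ and $s\x+\beta\mapsto\beta$ (and similarly in $\y$). Consequently the first two summands each reduce to $\alpha^2\beta^2$, while the two products inside the bracket reduce to $\beta^4$ and $\alpha^4$ respectively. Collecting these contributions gives $d_0=2\alpha^2\beta^2-a^2(\alpha^4+\beta^4)$, which is the asserted formula.

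There is essentially no obstacle in this argument: once the numerator has been written down, the identity is a single evaluation at the origin. The only point requiring a little care is the bookkeeping of which monomials survive the specialization, but since every surviving term arises solely from the constant parts $\alpha,\beta$ of the linear factors, the identification is immediate. I expect the same evaluation/specialization technique---setting one or both of $\x,\y$ to zero, or reading off top-degree behaviour---to be the natural mechanism for extracting the remaining coefficients $d_{12},d_{34},d_5,d_{67},d_8$ recorded in (\ref{d_ij}).
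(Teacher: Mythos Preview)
Your argument is correct: clearing the common denominator $(s\x+\beta)^2(s\y+\beta)^2$ and evaluating at $\x=\y=0$ immediately yields $d_0=2\alpha^2\beta^2-a^2(\alpha^4+\beta^4)$. The paper itself gives no proof of this lemma, treating it as a routine computation, so your direct evaluation is exactly the intended verification.
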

In order to obtain effective tropical curves, 
following an idea of \cite{KKNT}, we shall change the focus of ultradiscretization
process from the standard limit $\tau\to \I \infty$ along the imaginary axis 
to the limit $\tau\to 0$ along the semicircle emanating from $\frac14$.
We will implement this idea by, roughly speaking, replacing $q$-expansions 
of relevant analytic functions in $q_\tau$ 
by those in 
\begin{equation}
\label{OurQ}
q:=\exp(\pi\I \frac{4\tau-1}{4\tau}).
\end{equation}
\begin{Lemma} \label{lemma_eps}
$$
2\al(q)^2\bal(q)^2-
\left(\frac{\theta_2(0|2\tau)}{\theta_3(0|2\tau)}\right)^2
(\al(q)^4+\bal(q)^4)=0.
$$
\end{Lemma}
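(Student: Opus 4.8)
The plan is to recognize the claimed identity as a disguised form of the classical relations among the Jacobi theta nullwerte, with the nome bookkeeping supplied by the substitution (\ref{OurQ}). Set $\rho:=\frac{4\tau-1}{4\tau}$, so that $q=\exp(\pi\I\rho)$ is precisely the nome attached to the modulus $\rho$. First I would rewrite the two Euler products through the product formulas in (\ref{theta-def}): since $\al=\prod_{n=1}^\infty(1-q^{2n-1})^{-1}$ and $\bal=\prod_{n=1}^\infty(1+q^{2n-1})^{-1}$, factoring out the common $\prod_{n=1}^\infty(1-q^{2n})$ gives
\begin{equation*}
\al^2=\frac{\prod_{n=1}^\infty(1-q^{2n})}{\theta_4(0|\rho)},\qquad
\bal^2=\frac{\prod_{n=1}^\infty(1-q^{2n})}{\theta_3(0|\rho)}.
\end{equation*}
Substituting these and cancelling the common factor $\prod_{n=1}^\infty(1-q^{2n})^2$ throughout, the assertion $2\al^2\bal^2=a^2(\al^4+\bal^4)$ reduces to the purely theta-theoretic statement
\begin{equation*}
a^2=\frac{2\al^2\bal^2}{\al^4+\bal^4}
=\frac{2\,\theta_3(0|\rho)\,\theta_4(0|\rho)}{\theta_3(0|\rho)^2+\theta_4(0|\rho)^2}.
\end{equation*}

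Next I would apply the classical duplication (Landen-type) identities for the nullwerte,
\begin{equation*}
\theta_3(0|\rho)^2+\theta_4(0|\rho)^2=2\,\theta_3(0|2\rho)^2,\qquad
\theta_3(0|\rho)\,\theta_4(0|\rho)=\theta_4(0|2\rho)^2,
\end{equation*}
which collapse the right-hand side to $\bigl(\theta_4(0|2\rho)/\theta_3(0|2\rho)\bigr)^2$. It then remains only to identify $\theta_4(0|2\rho)/\theta_3(0|2\rho)$ with $a=\theta_2(0|2\tau)/\theta_3(0|2\tau)$ of Proposition \ref{EdwardsThm}.

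For this last step I would use that $\theta_3,\theta_4$ evaluated at the origin depend on $\rho$ only through the nome, hence are invariant under $\rho\mapsto\rho+2$; since $2\rho=2-\tfrac{1}{2\tau}$ this yields $\theta_j(0|2\rho)=\theta_j(0|-1/(2\tau))$ for $j=3,4$. Writing $\sigma=2\tau$ and invoking Jacobi's imaginary transformation $\theta_4(0|-1/\sigma)=\sqrt{-\I\sigma}\,\theta_2(0|\sigma)$ and $\theta_3(0|-1/\sigma)=\sqrt{-\I\sigma}\,\theta_3(0|\sigma)$, the automorphy factors cancel in the quotient and give $\theta_4(0|2\rho)/\theta_3(0|2\rho)=\theta_2(0|2\tau)/\theta_3(0|2\tau)=a$, as required.

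The routine parts are the product rewriting and the invocation of the duplication identities; the main obstacle is the nome bookkeeping in the final step. One must keep track of the factor $4$ in (\ref{OurQ}), the integer shift concealed in $2\rho=2-\tfrac{1}{2\tau}$, the sign in $\exp(\pi\I\rho)=-\exp(-\pi\I/(4\tau))$, and the branch of $\sqrt{-\I\sigma}$, and verify that each of these either is absorbed by periodicity in the nome or cancels in the ratio, so that the transformation faithfully converts the $q$-expansions on the $\al,\bal$ side into the $\tau$-data defining $a$.
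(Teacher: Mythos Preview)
Your proof is correct and uses the same toolkit as the paper---product formulas identifying $\al^2,\bal^2$ with theta nullwerte at modulus $\rho=\tfrac{4\tau-1}{4\tau}$, a Landen-type duplication, and a modular transformation linking $\rho$-data to $\tau$-data---but the route through the modular group is organized differently. The paper starts from $a^2$ at modulus $2\tau$, applies two Landen steps to pass through $\tau$ to $4\tau$, and then invokes Fricke's transformation $4\tau\to\rho$ to land on $\theta_3(0|\rho)/\theta_4(0|\rho)=\al^2/\bal^2$. You instead start on the $\al,\bal$ side, apply a single duplication $\rho\to 2\rho$, note that $2\rho=2-\tfrac{1}{2\tau}$, and use periodicity (trivial on $\theta_3,\theta_4$) followed by Jacobi's imaginary transformation to reach $2\tau$ directly. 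Your path is slightly more economical---one Landen identity instead of two, and the integer shift by $2$ costs nothing---while the paper's detour via $4\tau$ trades that economy for a single citation to Fricke's tables; the substance is the same.
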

\begin{proof}
Applying the Landen type transformations 
(cf.\,\cite[(1.8.5-6) \& Ex.2 of Chap.1]{L10}), we find
$$
\left(\frac{\theta_2(0|2\tau)}{\theta_3(0|2\tau)}\right)^2
=
\frac{\theta_3(0|\tau)^2-\theta_4(0|\tau)^2}{\theta_3(0|\tau)^2+\theta_4(0|\tau)^2}
=
\frac{2\theta_2(0|4\tau)\theta_3(0|4\tau)}{\theta_3(0|4\tau)^2+\theta_2(0|4\tau)^2}.
$$
Combining this with the theta transformation (cf. \cite[p.482 (5)]{F1916})
with $4\tau\to \frac{4\tau-1}{4\tau}$ in the form:
$$
\frac{\theta_2(0|4\tau)}{\theta_3(0|4\tau)}
=
\frac{\theta_3(0|\frac{4\tau-1}{4\tau})}{\theta_4(0|\frac{4\tau-1}{4\tau})}
=\prod_{n=1}^\infty
\frac{(1-q^{2n})(1+q^{2n-1})^2}{(1-q^{2n})(1-q^{2n-1})^2}
=\frac{\al(q)^2}{\bal(q)^2},
$$
we obtain 
\begin{equation} \label{a(q)}
\left(\frac{\theta_2(0|2\tau)}{\theta_3(0|2\tau)}\right)^2
=\frac{2}{(\al(q)/\bal(q))^2+(\bal(q)/\al(q))^2}
=\frac{2\al(q)^2\bal(q)^2}{\al(q)^4+\bal(q)^4}
\end{equation}
for our $q$ given in (\ref{OurQ}). 
This proves the assertion. 
\end{proof}
In the sequel, we set the parameters 
$\alpha=\bal(q)$ and $\beta=\al(q)$ 
in the fractional substitution (\ref{substitute}),
so that the resulting polynomial (\ref{general-equation}) 
has no constant term $d_0$
by virtue of Lemmas \ref{lemma_d0}-\ref{lemma_eps}.
Accordingly, we restrict ourselves to focusing on the
two-parameter family
\begin{equation}
\label{special-equation}
f_{r,s}(\x,\y):=
f_{r,s}^{\bal(q),\al(q)}(\x,\y)
=d_{12}(\x+\y)+d_{34}(\x^2+\y^2)+d_5 \x \y
+d_{67}(\x^2 \y+\y^2\x)+d_8 \x^2\y^2 
\end{equation}
with $r,s$ given as Laurent power series in 
$q^{1/N}$ ($N\in \N$) convergent around (possibly with poles at) $q=0$. 
Note that $\al r\ne \bal s$ is assumed as
the non-degeneracy condition 
for the substitutions (\ref{substitute}).
The coefficients 
$d_{12},d_{34},d_5,d_{67},d_8$ are derived 
in the form (\ref{d_ij}) by a simple computation
(using, say, Maple \cite{Maple}).
\begin{proof}[Proof of Proposition \ref{thm1.1}]
By construction, the plane curve $f_{r,s}(\x,\y)$ is
birationally equivalent to the Edwards curve
$E_a: x^2+y^2=a^2(1+x^2y^2)$ with $a^2=\frac{2\al^2\bal^2}{\al^4+\bal^4}$.
In \cite{E07}, the $j$-invariant of $E_a$ is known to be
$$
j(E_a)=\frac{1728}{108}\cdot 
\frac{(a^8+14a^4+1)^3}{a^4(a^4-1)^4}.
$$
If $a=a(\tau)$ is given as in Proposition 
\ref{EdwardsThm} in terms of theta-zero values, then
it follows by simple calculation that 
$j(E_a)=j(\tau)=j(\frac{4\tau-1}{\tau})$
where the latter equality is due to the 
$\mathrm{PSL}_2(\Z)$-invariance of $j$-function.
Thus
$$
j(E_a)=j\left(\exp(2\pi \I \frac{4\tau-1}{\tau})\right)
$$
with $j(\ast)$ in RHS being the standard Fourier 
series of the $j$-function 
$j(z)=\frac{1}{z}+744+196884z+\cdots $
in $z=e^{2\pi \I \tau}$.  
Since
$\exp(2\pi \I \frac{4\tau-1}{\tau}))=q^8$ for 
$q$ given in (\ref{OurQ}), the assertion follows.
\end{proof}
\section{Ultra-discretization}
\label{sect.3}
The power series $\al(q),\bal(q)$ converge on the Poincar\'e disk
$|q|<1$. 
In this section, we assume the two parameters $r(q),s(q)$ are
contained in $\C\{q^{1/N}\}[\frac{1}{q}]$,
the fractional field of the convergent power series ring
in $q^{1/N}$
for some large integer $N\in\N$.
We shall pursue the ultradiscretization of 
the points $(x,y)$ with
$$
x=x(z)=\frac{\theta_1(z|2\tau)}{\theta_4(z|2\tau)},\qquad
y=y(z)=\frac{\theta_2(z|2\tau)}{\theta_3(z|2\tau)}
$$
on the Edwards curve 
$$
x^2+y^2=\left(
\frac{\theta_2(0|2\tau)}{\theta_3(0|2\tau)}\right)^2
(1+x^2y^2)
$$
at the limit 
\begin{equation}
\label{OurQto0}
q=\exp(\pi\I \frac{4\tau-1}{4\tau})\to 0.
\end{equation}
Our method mostly follows the argument given in \cite{KKNT}
for the Hessian elliptic curves.
Introduce an adjustment constant $\theta\in \R_{>0}$
for the above limit in the form
\begin{equation}
\label{adjust-constant}
\frac{4\tau-1}{4\tau}=\frac{\I \theta}{\eps}\to \I\infty 
\quad (\eps\to 0).
\end{equation}
Let us first observe the behaviors of 
$x(z), y(z)$ with $z=u+\I v$ ($u,v\in\R$)
under the Fourier expansion (\ref{theta-def}) in $q$.
Note first that the theta transformation
\cite[p.482 (5)]{F1916} yields
\begin{equation}
\label{FrickeTF}
x(z)=\frac{\theta_1(z|2\tau)}{\theta_4(z|2\tau)}
=-\I \frac{\theta_1(\frac{-z}{2\tau}|\frac{2\tau-1}{2\tau})}{\theta_2(\frac{-z}{2\tau}|\frac{2\tau-1}{2\tau})} , \quad
y(z)=\frac{\theta_2(z|2\tau)}{\theta_3(z|2\tau)}
=\frac{\theta_3(\frac{-z}{2\tau}|\frac{2\tau-1}{2\tau})}{\theta_4(\frac{-z}{2\tau}|\frac{2\tau-1}{2\tau})} ,
\end{equation}
and our above setting (\ref{OurQto0})-(\ref{adjust-constant})
implies  
\begin{equation}
\exp(\pi\I \frac{2\tau-1}{2\tau})=-q^2
=\exp\left[\pi\I (-1+\frac{2\theta \I}{\eps})\right]
,\quad
\frac{u+\I v}{2\tau}=2(u+\I v)(1-\frac{\I \theta}{\eps}),
\end{equation}
which enables us to evaluate the Fourier expansion of
$x(z),y(z)$.
Let us first look at the numerator of $y(z)$ as follows:
\begin{align*}
\theta_3(\frac{-z}{2\tau}|\frac{2\tau-1}{2\tau})
&=
\sum_{n\in\Z}
\exp\left[n^2\pi\I (-1+\frac{2\theta \I}{\eps})
+2\pi\I n(-u-v\I)2(1- \frac{\I \theta}{\eps})
\right] \\
&=
\sum_{n\in\Z}
\exp\left[
-\pi
\left(
\frac{2\theta}{\eps}n^2
+4n(\frac{\theta}{\eps}u-v)
+\I (n^2+4n(u+\frac{\theta}{\eps}v))
\right)
\right]
.
\end{align*}
In the ultradiscretization process, the imaginary exponents
should be annihilated. This determines the imaginary part
$v$ of $z$ to be equal to $-u \eps/\theta$:
\begin{equation}
z=u-\I \eps \frac{u}{\theta}.
\end{equation}
Since $\exp(-n^2\pi\I)=(-1)^n$, this implies
\begin{equation*}
\theta_3(\frac{-z}{2\tau}|\frac{2\tau-1}{2\tau})
\sim
\sum_{n\in\Z} (-1)^n
\exp\left[
-\frac{2\pi \theta}{\eps}
\bigl((n+u)^2-u^2)\bigr)
\right]
\end{equation*}
as $\eps\to 0$.
After similar calculations for the denominator of $y(z)$ and
the numerator and denominator of $x(z)$, we summarize 
evaluations as follows:
\begin{equation}
\label{xy-eval0}
\begin{dcases}
x(u-\I \eps \frac{u}{\theta})
&\sim{}
-
\frac{
\sum_{n\in\Z} (-1)^n
\exp\left[
-\frac{2\pi \theta}{\eps}
(n+u+\frac12)^2
\right]}
{\sum_{n\in\Z}
\exp\left[
-\frac{2\pi \theta}{\eps}
(n+u+\frac12)^2
\right]}
\\
&\qquad={}
\frac{
\sum_{n\in\Z} (-1)^n
\exp\left[
{\color{black} -}
\frac{2\pi \theta}{\eps}
(n+u-\frac12)^2
\right]}
{\sum_{n\in\Z}
\exp\left[
-\frac{2\pi \theta}{\eps}
(n+u-\frac12)^2
\right]},
\\
y(u-\I \eps \frac{u}{\theta})
&\sim{} 
\frac{\sum_{n\in\Z} (-1)^n
\exp\left[
-\frac{2\pi \theta}{\eps}
(n+u)^2
\right]}
{\sum_{n\in\Z}
\exp\left[
-\frac{2\pi \theta}{\eps}
(n+u)^2
\right]}.
\end{dcases}
\end{equation}
Here we note that the factors $\exp(\frac{2\pi \theta}{\eps}(u^2+\frac14))$ 
(resp.  $\exp(\frac{2\pi \theta}{\eps}u^2)$) 
from $\theta_1,\theta_4$ (resp. from $\theta_2,\theta_3$)
are cancelled out 
in the numerator and denominator
in the above right hand sides' expressions
(and that $\exp(-\pi \I (n+\frac12)^2)=-\I$ and
$-(-1)^n=(-1)^{n+1}$ for the former expressions from $\theta_1,\theta_2$).
We next convert the above evaluations 
(\ref{xy-eval0})
of $x,y$ to
those of $\x,\y$ under the substitutions
(\ref{substitute}) with $\alpha=\bal(q)$ and $\beta=\al(q)$ 
via
\begin{equation}
\x=\frac{\al x-\bal}{-s x +r},\quad
\y=\frac{\al y -\bal}{-s y+r}. 
\end{equation}
Write $\al(q)=\sum_{k=0}^\infty a_k q^k$ so that
$\bal(q)=\sum_{k=0}^\infty (-1)^k a_k q^k$
(where we know $a_k>0$ for all $k\ge 0$)
and suppose that the two parameters $r,s\in\C\{q^{1/N}\}[\frac{1}{q}]$ 
are
given in the form 
\begin{equation}
r=\sum_{k=0}^\infty r_k q^{a+\frac{k}{N}}=
q^a (r_0+r_1 q^{1/N}+ \cdots),
\quad
s=\sum_{k=0}^\infty s_k q^{a+\frac{k}{N}}=q^a(s_0+s_1 q^{1/N}+\cdots)
\end{equation}
with
$a\in\frac{1}{N}\Z$, $|r_0|+|s_0|\ne 0$.
Then, we obtain:
{\small
\begin{equation}
\label{xy-eval}
\begin{dcases}
\x(u-\I \eps \frac{u}{\theta}) 
&\sim\ 
\frac{
P(u-\frac12,\theta,\eps)
\left(\sum_{k:odd\ge 1}2a_k q^k\right)
-
Q(u-\frac12,\theta,\eps)
\left(\sum_{k:even\ge 0} 2a_k q^k\right)
}
{P(u-\frac12,\theta,\eps)
\left(
\sum_k(r_k-s_k)q^{a+\frac{k}{N}}\right)
+
Q(u-\frac12,\theta,\eps)
\left(\sum_k(r_k+s_k)q^{a+\frac{k}{N}}\right)
},
\\
\y(u-\I \eps \frac{u}{\theta})
&\sim\ 
\frac{
P(u,\theta,\eps)
\left(\sum_{k:odd\ge 1} 2a_k q^k\right)
-
Q(u,\theta,\eps)
\left(\sum_{k:even\ge 0} 2a_k q^k\right)
}
{P(u,\theta,\eps)
\left(\sum_k(r_k-s_k)q^{a+\frac{k}{N}}\right)
+
Q(u,\theta,\eps)
\left(\sum_k(r_k+s_k)q^{a+\frac{k}{N}}\right)
},
\end{dcases}
\end{equation}
}
\\
\noindent
where
$P(u,\theta,\eps):=\sum_{n:even}\exp[
{\color{black} -}
\frac{2\pi\theta}{\eps}(n+u)^2]$, 
$Q(u,\theta,\eps):=\sum_{n:odd}\exp[
{\color{black} -}
\frac{2\pi\theta}{\eps}(n+u)^2]$.
At this stage,
we are led to define the ultradiscrete theta functions
$\ThetaEven$ and $\ThetaOdd$ as those 
limits of the quantities 
$P(u,\theta,\eps) $ and  $Q(u,\theta,\eps)$ respectively:
\begin{Definition}
For $u\in \R$, define
\begin{align*}
\ThetaEven(u):=\lim_{\eps\to 0+} \eps \log P(u,\theta,\eps)
&=
2\pi\theta \max_{n:even}(-(n+u)^2)
=-2\pi\theta
\left[2\lfloor \frac{u+1}{2} \rfloor-u\right]^2, \\
\ThetaOdd(u):=\lim_{\eps\to 0+} \eps \log Q(u,\theta,\eps)
&=
2\pi\theta \max_{n:odd}(-(n+u)^2)
=-2\pi\theta \left[2\lfloor \frac{u}{2} \rfloor +1-u\right]^2.
\end{align*}
\end{Definition}
\noindent
Note here that the distance from any $u\in\R$ to the nearest 
even (resp. odd) integer can be written as 
$|2\lfloor \frac{u+1}{2} \rfloor-u|$
(resp. $|2\lfloor \frac{u}{2} \rfloor +1-u|$).
It is easy to see that these functions $\ThetaEven,\ThetaOdd$ 
are even functions of period 2 on $\R$ and satisfy 
$\ThetaEven(\pm u\pm 1)=\ThetaOdd(\pm u)$.
With the above definition of $\ThetaEven,\ThetaOdd$
together with  (\ref{xy-eval}),
we conclude that the ultradiscrete limit
$(X(u),Y(u))$ of the point $(\x(u-\I \eps u/\theta),
\y(u-\I \eps u/\theta))$
is given by:
\begin{equation}
\begin{dcases}
X(u)&=\lim_{\eps\to 0+} 
\eps \log
\x(u-\I \eps \frac{u}{\theta})=Y(u-\frac12),
  \\
Y(u)&= 
\lim_{\eps\to 0+} 
\eps \log
\y(u-\I \eps \frac{u}{\theta}) \\
&=\mathrm{max}\left(\ThetaOdd(u),-1+\ThetaEven(u)\right) \\
&\quad
-\mathrm{max}
\left(-{\color{black} v_\mathbb{K}}(r-s)+\ThetaEven(u),
-{\color{black} v_\mathbb{K}}(r+s)+\ThetaOdd(u)\right).
\end{dcases}
\end{equation}
\begin{Remark}
The minus sign problem for ultradiscrete limits can be avoided
in our case. See \cite[Remark 2.1]{KNT08} (cf.\,also \cite{KL06}).
\end{Remark}
Now let us normalize our adjustment constant $\theta$
introduced above in (\ref{adjust-constant})
by comparing our ultradiscrete limit with
the non-archimedean amoeba 
studied in Einsiedler-Kapranov-Lind \cite{EKL06}
and in Speyer \cite{S14}.
In fact, our fundamental
quantities $\al(q),\bal(q)$ and the two parameters $r(q),s(q)$
are considered as elements of the field of 
convergent Laurent series $\C\{q^{1/N}\}[\frac{1}{q}]$
for taking analytic limits $q\to 0$, however 
we are able to enhance this analytic procedure 
to a more general algebraic process 
via the valuation theory:
Consider $\C\{q^{1/N}\}[\frac{1}{q}]$ as a subfield of 
the standard Puiseux power series $\C\{\!\{q\}\!\}$
which has the standard valuation
$v:\C\{\!\{q\}\!\}\to \Q\cup\{\infty\}$ with $v(q)=1$
and has the non-archimedean norm $||a||:=e^{-v(a)}$
($a\in \C\{\!\{q\}\!\}$). According to \cite{EKL06}, 
the non-archimedean amoeba 
$\mathcal{A}(S)\in\R^2$ of a subscheme 
$S\subset (\C\{\!\{q\}\!\}^\times)^2$
is by definition the closure of the image of $S$ by
the map $\mathrm{Log}:(\C\{\!\{q\}\!\}^\times)^2\to \R^2$ 
($(\x,\y)\mapsto (\log ||\x||,\log||\y||)$), and 
the tropical variety $\mathrm{Trop}(S)\subset \R^2$ is the
closure of the image of $X$ by the map
$\mathrm{val}:(\C\{\!\{q\}\!\}^\times)^2\to \R^2$ defined by
$(\x,\y)\mapsto (v(\x),v(\y))$; this is equivalent
to saying that 
\begin{equation}
\mathrm{Trop}(S)=-\mathcal{A}(S).
\end{equation}
As given in (\ref{OurQto0})-(\ref{adjust-constant}), 
our ultradiscrete limits are taken with respect to 
the base scaling $q=\exp(-\pi\theta/\eps)$ $ (\eps\to 0)$, 
in particular $\eps\log q \to-\pi \theta$.
Comparing this with $v(q)=1$, $\log||q||=-1$ in the non-archimedean
metric of $\C\{\!\{q\}\!\}$, we  
are led to normalize our above adjustment
constant $\theta$ as follows:
\begin{equation}
\label{normalized-theta}
\theta :=1/\pi.
\end{equation}
This enables us to identify the plane curve
$\{(X(u),Y(u))\mid{u\in\R} \}\subset \R^2$ to lie on
the cycle part of the 
non-archimedean amoeba $\mathcal{A}(f_{r,s}(\x,\y)=0)$,
viz., 
the cycle part of the 
closure of
\begin{equation}
\mathrm{Log}\left(
\Bigl\{(||\x||,||\y||)\left| f_{r,s}(\x,\y)=0, \x\y\ne 0 ,
\ 
(\x,\y)\in \C\{\!\{q\}\!\}^2 \Bigr\}\right.\right)
\subset \Q^2.
\end{equation}
\section{Proofs of Theorem 1.2 and Corollary 1.3}
\label{algebraicProof}
Now we convert the analytic theory of Edwards curve to algebraic
theory over the complete discrete valuation field $K$ 
of characteristic 0
with the
algebraic closure $\bK$ as set up in Introduction.  
The Edwards curve
$$
E_a: x^2+y^2=a^2(1+x^2y^2)
$$ 
around the neighborhood of 
$q=\exp(\pi\I \frac{4\tau-1}{4\tau})\to 0$ 
as in (\ref{OurQ}) hints us how to convert necessary identities 
into the algebraic situation.
As shown in (\ref{a(q)}), the parameter $a$ should be given by 
$$
a^2  =\frac{2\al(q)^2\bal(q)^2}{\al(q)^4+\bal(q)^4}\in K
$$
through which we define the curve $E_a$ over $K$.
We next convert the analytic uniformization of $E_a$ 
(Proposition \ref{EdwardsThm}) by the complex $z$-plane
to the algebraic uniformization by $\bK^\times$ in Tate form.
We first convert the Jacobian theta functions (\ref{theta-def}) 
into the reduced algebraic series $\btheta_1(t,-q^2),
\dots,\btheta_4(t,-q^2)$ by substituting 
$q_\tau\mapsto -q^2$, $q_z{\,\color{black} \mapsto\,} t$ and by dropping the 
fragment factors $-\I (-q^2)^{1/4}$, $(-q^2)^{1/4}$ 
from $\theta_1,\theta_2$ respectively:
\begin{equation}
\label{algtheta-def}
\begin{dcases}
\btheta_1(t,-q^2)&= 
t\sum_{n\in\Z} (-1)^n q^{2n^2+2n}t^{2n}, \\
\btheta_2(t,-q^2)&= 
t \sum_{n\in\Z}
q^{2n^2+2n}t^{2n}, \\
\btheta_3(t,-q^2)&= \sum_{n\in\Z}
(-1)^n q^{2n^2} t^{2n}, \\
\btheta_4(t,-q^2)&=  \sum_{n\in\Z}
q^{2 n^2}t^{2n} .
\end{dcases}
\end{equation}
It turns out from Fricke's transformation (\ref{FrickeTF}) that
those fragment factors from $\theta_1,\theta_2$ cancel each
other and that the algebraic uniformization is
given by 
\begin{equation}
\label{x_t-y_t}
x(t)=- \frac{\btheta_1(t,-q^2)}{\btheta_2(t,-q^2)} , \quad
y(t)=\frac{\btheta_3(t,-q^2)}{\btheta_4(t,-q^2)} 
\qquad (t\in \bK^\times).
\end{equation}
Noting the identities 
\begin{equation}
\label{theta_xy_tq}
\begin{cases}
\btheta_1(t,-q^2)&=t\btheta_3(tq,-q^2),\\
\btheta_2(t,-q^2)&=t\btheta_4(tq,-q^2),\\
\btheta_3(t,-q^2)&=-tq\btheta_1(tq,-q^2),\\
\btheta_4(t,-q^2)&=tq\btheta_2(tq,-q^2);
\end{cases}
\text{ and }
\begin{cases}
x(t)&=x(-t), \\
y(t)&=y(-t),\\
x(tq)&=y(t),\\
y(tq)&=-x(t).
\end{cases}
\end{equation}
we see that the Tate uniformization map $\bK^\times \to E_a(\bK)$
by $t\mapsto (x(t),y(t))$ gives rise to 
$$
E_a(\bK)=\bK^\times/\la \pm q^{4\Z} \ra .
$$
We then apply the fractional substitutions in the same way
as (\ref{substitute}) 
\begin{equation} 
\label{eq4.4}
x=\frac{r\x+\bal(q)}{s\x+\al(q)}, \quad
y=\frac{r\y+\bal(q)}{s\y+\al(q)}.
\end{equation}
for given constants $r,s \in\bK$ with
$\bal(q) s \ne \al(q) r$ to obtain
the curve (\ref{special-equation})
\begin{equation}
f_{r,s}(\x,\y)
=d_{12}(\x+\y)+d_{34}(\x^2+\y^2)+d_5 \x \y
+d_{67}(\x^2 \y+\y^2\x)+d_8 \x^2\y^2 =0
\end{equation}
with coefficients $d_{12},d_{34},d_5,d_{67},d_8$ given as in
Introduction (\ref{d_ij}).
To obtain the tropical curve $C(\mathrm{trop}(f_{r,s}))$ is thus reduced
to evaluating the points $(\x,\y)$ in $(\bK^\times)^2$
by the valuation map
$\mathrm{val}:(\bK^\times)^2\to \Q^2$ that applies 
$v_\bK:\bK\to \Q\cup \{\infty\}$
to each component $\x,\y$ of (\ref{eq4.4}) through: 
\begin{equation}
\label{transofrmed_xy}
\x=\frac{\al(q) x-\bal(q)}{-s x +r},\quad
\y=\frac{\al(q) y -\bal(q)}{-s y+r}. 
\end{equation}
In order to prove Theorem \ref{thm1.2}, 
it suffices to get partial information
on (the
cycle part of) $C(\mathrm{trop}(f_{r,s}))$ 
by performing a procedure parallel to 
(\ref{xy-eval}).
We first observe from (\ref{x_t-y_t}) and (\ref{transofrmed_xy}) that
\begin{align}
\label{yyt}
\y(t)
&= 
\frac{(\al-\bal)(\btheta_4+\btheta_3)-(\al+\bal)(\btheta_4-\btheta_3)}
{(r-s)(\btheta_4+\btheta_3)+(r+s)(\btheta_4-\btheta_3)} \\
&=
\frac{(\al-\bal)(\sum_{n:even}t^{2n}q^{2n^2})
-(\al+\bal)(\sum_{n:odd}t^{2n}q^{2n^2})}
{(r-s)(\sum_{n:even}t^{2n}q^{2n^2})+(r+s)(\sum_{n:odd}t^{2n}q^{2n^2})} .
\nonumber
\end{align}
Recall that the valuation $v_\bK$ of $\bK$ is normalized as
$v_\bK(q)=1$. 
To introduce a variable $u$ for the valuation $v_\bK(t)$, let us take into
account the ultradiscrete limits discussed in \S \ref{sect.3}
where the variable $u$ is normalized as 
$$
t=q_z=\exp({\pi\I \frac{-u-v\I}{2\tau}})
=\exp\left(-2\pi\left(\frac{\theta}{\eps}+\frac{\eps}{\theta}\right)
{\color{black} u}\right),
\quad
\lim_{\eps\to 0} \eps \log(t)=- 2\pi \theta u,
$$
whereas
$$
q =\exp\left(\pi \I \frac{4\tau -1}{{\color{black} 4}\tau}\right)
=\exp(-\frac{\pi \theta}{\eps}), \quad \lim_{\eps\to 0} \eps \log(q)=-\pi \theta.
$$
Thus, let us set
\begin{equation}
\label{u-scale}
v_\bK(t)
\bigl(=2v_\bK(q) u\bigr)
=2u 
\qquad (u\in \Q).
\end{equation}
We shall first 
evaluate $v_\bK(\y(t))$ when the numerator and denominator
have a unique term with distinguished valuation respectively.
First note that $v_\bK(\al-\bal)=1$, $v_\bK(\al+\bal)=0$.
Since 
$v_\bK(t^{2n}q^{2n^2})=2(n+u)^2-2u^2$, we find that  
$$
v_\bK(\sum_{n:even}t^{2n}q^{2n^2})\ge
2(2\lfloor \frac{u+1}{2}\rfloor-u)^2-2u^2
=-\ThetaEven(u)-2u^2
$$
with equality held for $u\in \Q\setminus (1+2\Z)$
and that
$$
v_\bK(\sum_{n:odd}t^{2n}q^{2n^2})\ge
2(2\lfloor \frac{u}{2}\rfloor+1-u)^2-2u^2
=-\ThetaOdd(u)-2u^2
$$
with equality held for $u\in \Q\setminus 2\Z$.
Therefore,
\begin{align} \label{vyt}
v_\bK(\y(t))&=\mathrm{min}\Bigl(1-\ThetaEven(u), -\ThetaOdd(u)\Bigr) \\
&\quad
-\mathrm{min}\Bigl(v_\bK(r-s)-\ThetaEven(u), v_\bK(r+s)-\ThetaOdd(u)\Bigr)
\nonumber
\end{align}
for $u=\frac12 v_\bK(t)\in\Q$ not belonging to the exceptional
set 
\begin{equation}
\Xi_{r,s}:=\Z\cup
\left\{u\in \Q\left|
\begin{matrix*}[c] &1-\ThetaEven(u)= -\ThetaOdd(u),\  \\ 
&v_\bK(r-s)-\ThetaEven(u) = v_\bK(r+s)-\ThetaOdd(u)
\end{matrix*}
\right.\right\}.
\end{equation}
As for $v_\bK(\x(t))$, noting that 
(\ref{theta_xy_tq}) and (\ref{transofrmed_xy}) imply
\begin{equation}
\x(t)=\y(tq^{-1})
\label{xfromy}
\end{equation}
and that $v_\bK(tq^{-1})=2u-1=2(u-\frac12)$, 
we easily derive from (\ref{vyt}) the formula:
\begin{align} \label{vxt}
v_\bK(\x(t))&=\mathrm{min}\Bigl(1-\ThetaEven(u-\frac12), 
-\ThetaOdd(u-\frac12)\Bigr) \\
&\quad
-\mathrm{min}\Bigl(v_\bK(r-s)-\ThetaEven(u-\frac12), v_\bK(r+s)
-\ThetaOdd(u-\frac12)\Bigr)
\nonumber
\end{align}
for $u=\frac12 v_\bK(t)$ such that $u-\frac12\not\in\Xi_{r,s}$.
Thus, 
$C(\mathrm{trop}(f_{r,s}))\subset \R^2$ contains 
the particular set 
\begin{equation}
\left\{\left.
\begin{matrix*}[l]
\mathrm{val}(\x(t),\y(t))\in\Q^2 \\
\bigl(
=(v_\bK(\x(t)),v_\bK(\y(t)))\bigr)
\end{matrix*}
\right|
\tfrac12 v_\bK(t)\not\in \Xi_{r,s}\cup \bigl(\tfrac12+\Xi_{r,s}\bigr),
\quad t\in \bK^\times
\right\}
\end{equation}
whose coordinates are explicitly known by 
(\ref{vxt}) and (\ref{vyt}).
It is then not difficult to see that the closure of the above set 
in the Euclidean plane $\R^2$ is 
equal to the locus of the points $(-X(u),-Y(u))$ with $u\in \R$
given in Theorem 1.2 (iii), 
after noting the general 
equality $-\mathrm{max}(A,B)=\mathrm{min}(-A,-B)$.
The proof of Theorem 1.2 is completed.
\qed
\begin{proof}[Proof of Corollary \ref{cor1.3}]
Since the cycle part of $C(\mathrm{trop}(f_{r,s}))$ is parametrized 
as $(-Y(u-\frac12),-Y(u))$ for $u\in \R$
by Theorem \ref{thm1.2}, each side of the cycle 
can be captured by the shape of the piecewise linear 
graph 
$$
Z=Y(u)= \mathrm{max}\left(\ThetaOdd(u),-1+\ThetaEven(u)\right) 
-\mathrm{max}\left(\delta +\ThetaEven(u), \ThetaOdd(u)
\right)
-v_\bK(r+s),
$$
and its translation $Z=Y(u-\frac12)$
{\color{black} on (u,Z)-plane}. 
Setting
$$
Y_\delta(u):=\mathrm{max}\left(\ThetaOdd(u),-1+\ThetaEven(u)\right) 
-\mathrm{max}\left(\delta +\ThetaEven(u), \ThetaOdd(u)
\right),
$$
so that 
$$
\binom{-Y(u-\frac12)}{-Y(u)}
=\binom{-Y_\delta(u-\frac12)}{-Y_\delta(u))} +
v_\bK(r+s)\binom{1}{1},
$$
we find that the shape of the cycle part of $C(\mathrm{trop}(f_{r,s}))$
depends only on $\delta$.
The rest is not difficult after tracing the move of 
the cycle
$(-Y_\delta(u-\frac12),-Y_\delta(u))_{u\in\R}$ 
on $XY$-plane under
the parameter $\delta$ varied in $\R$ 
(summarized in the following figures for $\delta=-2.5, -1.5, -0.5, 1, 1.5, 2, 2.25$).
\begin{figure}[H]
$\delta=-2.5 \qquad\qquad \delta=-1.5 \qquad\qquad \delta=-0.5$\\
\includegraphics[width=3cm, bb=0 0 487 464]{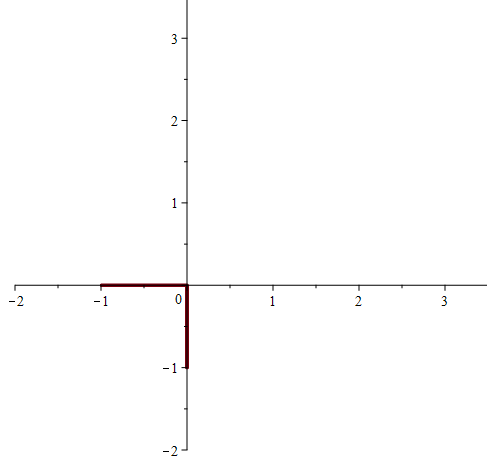}
\includegraphics[width=3cm, bb=0 0 487 464]{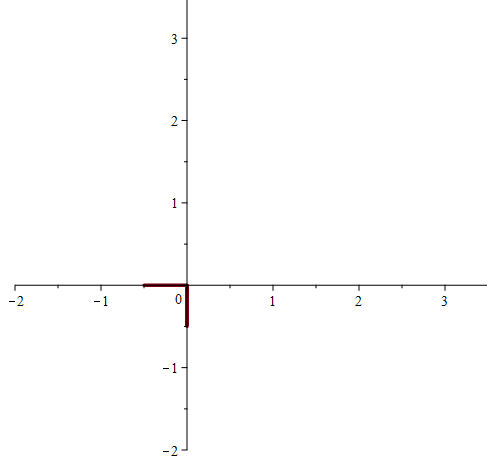}
\includegraphics[width=3cm, bb=0 0 487 464]{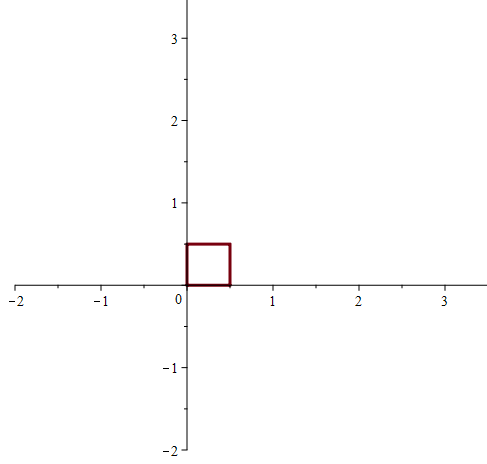} \\
\includegraphics[width=3cm, bb=0 0 487 464]{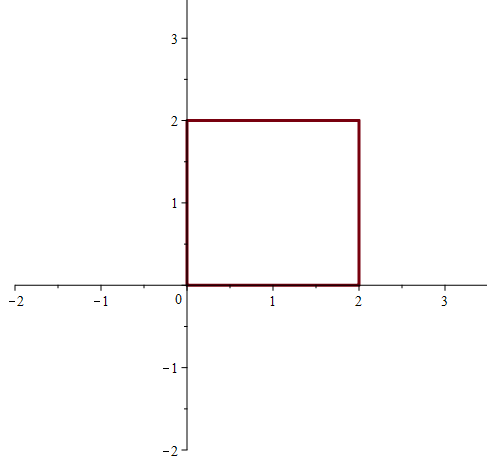} 
\includegraphics[width=3cm, bb=0 0 487 464]{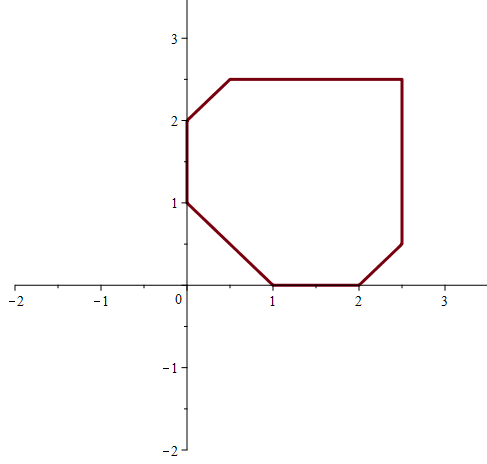} 
\includegraphics[width=3cm, bb=0 0 487 464]{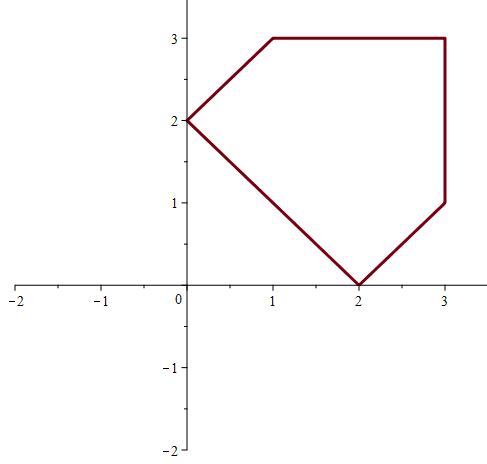} 
\includegraphics[width=3cm, bb=0 0 487 464]{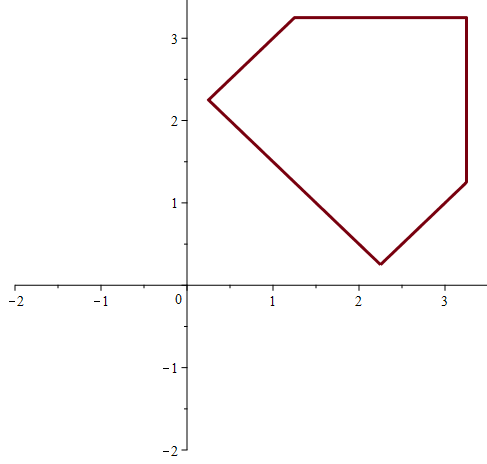} \\
$\delta=1 \qquad\qquad\qquad \delta=1.5 
\qquad\qquad\qquad \delta=2 \qquad\qquad\qquad \delta=2.25$
\end{figure}
\end{proof}
\section{Proof of Proposition 1.4}
In \cite{T}, the second named author studied 
the condition for a truncated symmetric cubic
to have smooth tropicalization.
Let 
$$
f(\x,\y)=
d_{12}(\x+\y)+d_{34}(\x^2+\y^2)+d_5 \x \y
+d_{67}(\x^2\y+\y^2\x)+d_8 \x^2\y^2 
$$
and let $u_{12},u_{34},u_5,u_{67},u_8$ be valuations of 
$d_{12},d_{34},d_5,d_{67},d_8$ respectively.
Then, we have
\begin{Proposition}[\cite{T}]
\label{rani}
Suppose that the tropical elliptic curve $trop(f)$ has 
a polygonal cycle $P$ and let
$(u_{12},u_{34},u_5,u_{67},u_8)$ be the associated parameter
whose entries are 
valuations of the coefficients $d_{12},d_{34},d_5,d_{67},d_8$
respectively. Then, $trop(f)$ is tropically smooth
if and only if $(P; u_{12},u_{34},u_5,u_{67},u_8)$ satisfies
one of the conditions listed in Table \ref{SmoothTropF}:
\begin{table}[hbtp]
  \caption{Cases of smooth $trop(f)$}
  \label{SmoothTropF}
  \centering
  \begin{tabular}{ll}
    \hline
    $P$ & $\quad (u_{12},u_{34},u_5,u_{67},u_8)$    \\
    \hline
    \hline
{{\color{black} \rm [i]} Triangle}   & 
$
\begin{cases*}
-u_{34} + 2 u_{67} - u_8 < 0, \\
u_{12} - u_5 - u_{67} + u_8 < 0, \\
-2u_{12} + 3u_5 - u_8 < 0.
\end{cases*}
$       \\     \hline
{{\color{black} \rm [ii]} Square}  &    $\begin{cases*}
-u_5 + 2u_{67} - u_8 < 0, \\
-u_{12} + 2u_5 - u_{67} < 0, \\
u_{12} - u_{34} - u_5 + u_{67} < 0.
\end{cases*}$ \\      \hline
{{\color{black} \rm [iii]} Pentagon} &   $\begin{cases*}
u_5 - 2u_{67} + u_8 < 0, \\
-u_{12} + u_5 + u_{67} - u_8 < 0, \\
u_{12}- u_{34} - u_5 + u_{67} < 0.
\end{cases*}$      \\      \hline
{{\color{black} \rm [iv] }Hexagon}   &  $\begin{cases*}
-u_5 + 2u_{67} - u_8 < 0, \\
-u_{34} + u_5 < 0, \\
-u_{12} + u_{34} + u_5 - u_{67} < 0.
\end{cases*}$  \\      \hline
{ {\color{black} \rm [v] }Heptagon} &   $\begin{cases*}
u_5 - 2u_{67} + u_8 < 0, \\
-u_{34} + 2u_{67} - u_8 < 0, \\
-u_{12} + u_{34} + u_5  - u_{67} < 0.
\end{cases*}$  \\ 
      \hline
       \end{tabular}
\end{table}
\end{Proposition}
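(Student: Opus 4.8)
The plan is to prove the criterion through the standard duality between a plane tropical curve and the regular subdivision of its Newton polygon. First I would record that $\mathrm{Newt}(f)$ is the pentagon $N$ with vertices $(1,0),(2,0),(2,2),(0,2),(0,1)$, carrying the seven boundary lattice points together with the unique interior point $(1,1)$; by the symmetry of $f$ the heights attached to these points are exactly $u_{12}$ at $(1,0),(0,1)$, $u_{34}$ at $(2,0),(0,2)$, $u_5$ at $(1,1)$, $u_{67}$ at $(2,1),(1,2)$ and $u_8$ at $(2,2)$. Since the paper uses the $\min$-convention, the relevant object is the regular subdivision $\Delta$ of $N$ obtained by projecting the lower faces of the lifted configuration $\{((i,j),u_{ij})\}$, and $\mathrm{trop}(f)$ is combinatorially dual to $\Delta$.

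The next step is to invoke the classical smoothness criterion: $\mathrm{trop}(f)$ is a smooth tropical curve if and only if $\Delta$ is a unimodular triangulation of $N$, i.e.\ every two-cell is a lattice triangle of normalized area $1$. As $N$ has normalized area $7$, such a triangulation has exactly seven triangles, and --- this is the geometric heart of the statement --- the cycle of $\mathrm{trop}(f)$ is dual to the star of the interior vertex $(1,1)$, so that the cycle is a $k$-gon precisely when $(1,1)$ has valence $k$ in $\Delta$. This forces $3\le k\le 7$ and matches the five rows of Table \ref{SmoothTropF}. I would then enumerate, up to the involution $(i,j)\mapsto(j,i)$ induced by $\x\leftrightarrow\y$, the unimodular triangulations of $N$ for each valence $k$; here one uses that the two boundary edges from $(2,0)$ to $(2,2)$ and from $(2,2)$ to $(0,2)$ have lattice length $2$, so unimodularity forces $(2,1)$ and $(1,2)$ to be genuine vertices of $\Delta$.

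For each of the five combinatorial types I would then translate the requirement that the lower hull induces exactly this triangulation into the defining inequalities of its secondary cone. Each inequality is a local convexity condition: for instance $2u_{67}<u_{34}+u_8$ (written $-u_{34}+2u_{67}-u_8<0$) says that $(2,1)$ lies strictly below the segment joining the lifts of $(2,0)$ and $(2,2)$, so that the right edge is actually subdivided, while the conditions containing $u_5$ select which diagonals are drawn around the center and thereby fix the valence of $(1,1)$. I would verify that, for each type, the three listed inequalities are precisely the facet inequalities of the secondary cone, that they simultaneously guarantee primitivity of every triangle (hence smoothness), and that the resulting cones together with their symmetric partners cover every smooth configuration; by the $\x\leftrightarrow\y$ symmetry it suffices to analyse one representative in each row.

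The main obstacle will be the bookkeeping in this last step: showing that the listed triple of inequalities is simultaneously necessary and sufficient --- that no facet of a secondary cone is omitted and none is redundant, and, crucially, that imposing the prescribed cycle shape together with these inequalities already forces the \emph{entire} subdivision to be unimodular rather than merely triangulating a neighbourhood of $(1,1)$. Ruling out the degenerate configurations, in which the cycle has the correct shape but some outer boundary edge remains unsubdivided so that a non-primitive triangle survives away from the center, is the delicate point, and this is where the precise form of the three inequalities in each row of the table has to be pinned down.
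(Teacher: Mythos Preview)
Your approach is correct and is precisely the one the paper indicates: the paper's proof consists only of the sentence ``This follows from a close look at the `subdivision' associated to $\mathrm{trop}(f)$'' together with a reference to \cite[5.4]{T}, so your plan via the dual regular subdivision of the Newton pentagon and the secondary-cone inequalities is exactly what is intended. One small simplification you can make: because the height function is forced to be symmetric under $(i,j)\mapsto(j,i)$ (the coefficients $d_{12},d_{34},d_{67}$ are shared by symmetric pairs of monomials), every regular subdivision that actually occurs is already invariant under this involution, so you need only enumerate the \emph{symmetric} unimodular triangulations rather than work ``up to the involution''.
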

\begin{proof}
This follows from a close look at the ``subdivision''
associated to $\mathrm{trop}(f)$.
For more details, we refer the reader to 
\cite[5.4]{T}.
\end{proof}
\begin{proof}[Proof of Proposition \ref{prop1.4}]
Now, let us prove Proposition \ref{prop1.4} by combining
Corollary \ref{cor1.3} and the above
Proposition \ref{rani}.
Note that, for our two-parameter family $f_{r,s}$,
only cycles of squares, heptagons or pentagons can occur
as observed in Corollary \ref{cor1.3} .
We discuss case by case according to the 
value of $\delta=\delta_{r,s}$.
(i) Suppose that $P$ is a pentagon, i.e., $2\le \delta$.
Then, the possible subdivisions producing a pentagon
are the following two types:
\begin{center}
\begin{minipage}{5cm}
\begin{figure}[H]
\centering
\begin{tikzpicture}
\node [roundnode] (a) at (1,0) {};
\node [roundnode] (b) at (2,0) {};
\node [roundnode] (c) at (2,1) {};
\node [roundnode] (d) at (2,2) {};
\node [roundnode] (e) at (1,2) {};
\node [roundnode] (f) at (0,2) {};
\node [roundnode] (g) at (0,1) {};
\node [roundnode] (h) at (1,1) {};
\draw (a) -- (b) -- (c) -- (d) -- (e) -- (f) -- (g) -- (a);
\draw (h) -- (a);
\draw (h) -- (b);
\draw (h) -- (d);
\draw (h) -- (f);
\draw (h) -- (g);
\end{tikzpicture}
$\quad,$
\end{figure}
\end{minipage}
\begin{minipage}{5cm}
\begin{figure}[H]
\centering
\begin{tikzpicture}
\node [roundnode] (a) at (1,0) {};
\node [roundnode] (b) at (2,0) {};
\node [roundnode] (c) at (2,1) {};
\node [roundnode] (d) at (2,2) {};
\node [roundnode] (e) at (1,2) {};
\node [roundnode] (f) at (0,2) {};
\node [roundnode] (g) at (0,1) {};
\node [roundnode] (h) at (1,1) {};
\draw (a) -- (b) -- (c) -- (d) -- (e) -- (f) -- (g) -- (a);
\draw (h) -- (g);
\draw (h) -- (e);
\draw (h) -- (d);
\draw (h) -- (c);
\draw (h) -- (a);
\draw[thick,dashed]  (g) -- (e);
\draw[thick,dashed]  (a) -- (c);
\end{tikzpicture}
$\quad,$
\end{figure}
\end{minipage}
\end{center}
\noindent
where dashed lines may or may not exist.
Since a smooth tropical curve corresponds to 
a subdivision by triangles of area $\frac12$, 
the smooth case occurs only from the latter type.
Both of these subdivisions produce pentagons as dual graphs
consisting of three right angles and two obtuse angles,
however their shapes differ from each other in that
the former has separated obtuse angles (like the baseball homeplate) 
while the latter has adjacent obtuse angles. 
On the other hand, the explicit parametrization
of the cycle given in Theorem \ref{thm1.2} claims
that only the former type of pentagon occurs in our family, 
all of which turn our to correspond to 
non-smooth tropical curves.
(ii) Suppose next that $P$ is a heptagon, i.e., 
$1< \delta <2$.
In this case, the corresponding subdivision 
is pictured as follows.
\begin{figure}[H]
\centering
\begin{tikzpicture}
\node [roundnode] (a) at (1,0) {};
\node [roundnode] (b) at (2,0) {};
\node [roundnode] (c) at (2,1) {};
\node [roundnode] (d) at (2,2) {};
\node [roundnode] (e) at (1,2) {};
\node [roundnode] (f) at (0,2) {};
\node [roundnode] (g) at (0,1) {};
\node [roundnode] (h) at (1,1) {};
\draw (a) -- (b) -- (c) -- (d) -- (e) -- (f) -- (g) -- (a);
\draw (h) -- (g);
\draw (h) -- (f);
\draw (h) -- (e);
\draw (h) -- (d);
\draw (h) -- (c);
\draw (h) -- (b);
\draw (h) -- (a);
\end{tikzpicture}
\end{figure}
\noindent
This produces a smooth tropical curve.
(iii) Suppose that $P$ is a square, i.e., $-1<\delta \le 1$.
By the result of \cite{V09}-\cite{KMM} and Proposition \ref{thm1.1}, the smooth tropical curve has a cycle of length 8, hence
by Corollary \ref{cor1.3}, it can occur only when $\delta=1$.
Then, after dividing $r,s$ by $q^{v_\bK(r+s)}$, without loss of
generality, we may assume that the parameters $r,s$ are 
of the form:
\begin{align*}
r&=1+r_0q +\sum_{k\ge 1} r_k q^{1+\frac{k}{N}},  \\
s&=-1+s_0q +\sum_{k\ge 1}s_kq^{1+\frac{k}{N}}
\end{align*} 
with $r_0+s_0\ne 0$ for some integer $N>0$.
By simple computations, we find
$u_{12}=1$, $u_5= 0$ and $u_{67}=1$ hold
independently of the choice of $N$.
Then, the smoothness condition (for the square cycle case) 
in Proposition 
{\color{black} 5.1 (the third inequality of Table 1 [ii])}
implies
$$
2<u_{34}=1+v_\bK(\bal^2s^2-\al^2r^2)=
v_\bK{\color{black} \Bigl(
(-4-2s_0-2r_0)q^2+O(q^{2+\frac{1}{N}}
)  \Bigr)},
$$
hence $r_0+s_0=-2$.
Conversely, if $r_0+s_0=-2$, then
$$
v_\bK(\al^2s^2-\bal^2r^2)=
v_\bK{\color{black} \Bigl(
(4-2s_0-2r_0)q}+O(q^{1+\frac{1}{N}}){\color{black} \Bigr)}=1 
,
$$
so that $u_8=u_{34}>2$. This satisfies the smoothness 
condition given in Proposition {\color{black} 5.1 [ii]}.
\end{proof}
\section{Examples}
By virtue of Speyer's work 
\cite[Sect. 7]{S14}, 
the structure of $C(\mathrm{trop}(f_{r,s}))$ 
as a metric graph can be viewed as the projection from a certain
subtree of the Bruhat-Tits tree, if the divisors
(zeros and poles) of the elliptic functions $\x$, $\y$
are known on 
the curve
\begin{equation}
\label{EdwardsEq61}
E_{r,s}: f_{r,s}(\x,\y)
=d_{12}(\x+\y)+d_{34}(\x^2+\y^2)+d_5 \x \y
+d_{67}(\x^2 \y+\y^2\x)+d_8 \x^2\y^2 =0
\end{equation}
with coefficients $d_{12},d_{34},d_5,d_{67},d_8$ given as in (\ref{d_ij}).
We investigate those divisors in view of the theta uniformization 
${\color{black} \wp \colon}
\bK^\times\twoheadrightarrow 
\bK^\times/\la\pm q^{4\Z}\ra
=E_{r,s}(\bK)$ 
determined by the pair $(\x(t),\y(t))$ of functions in $t\in \bK^\times$
(given by (\ref{yyt}) and (\ref{xfromy})) as 
in \S \ref{algebraicProof}, and illustrate 
$C(\mathrm{trop}(f_{r,s}))$ in some special cases of 
parameters $r,s\in \bK$.
In the following examples, we content ourselves with observing smooth cases,
while we hope to look into details of  (subtle) phenomena 
appearing in non-smooth cases in a future separate article.
\subsection{Bruhat-Tits tree}
Let $E$ be the elliptic curve over $\bK$, the smooth completion of the
affine curve defined by $f_{r,s}(\x,\y)=0$.
Noting that $\bK$ is assumed to be an algebraically closed field
of characteristic $0$, we may compose the above 
theta parametrization with the square power map to fit
in the usual form
of Tate uniformization
\begin{equation}
\label{TateUni}
\bK^\times 
{\color{black} \overset{\wp}{ {\relbar\joinrel\twoheadrightarrow} }}
\,
\bK^\times/\la \pm q^4\ra 
{\color{black} =E_{r,s}(\bK) 
\underset{t\mapsto t^2}{\overset{\sim}{\longrightarrow}}
\bK^\times/\la q^8\ra .}
\end{equation}
Denote by $Z_\x$ (resp. $Z_\y$) the set of zeros in $t\in \bK^\times$
of the function $\x\circ \wp$ (resp. $\y \circ \wp)$)
and by $P_\x$ (resp. $P_\y$) the set of poles of $\x\circ\wp$ 
(resp. $\y\circ\wp$), and set 
$$
Z:=Z_\x\cup Z_\y, \quad P:=P_\x \cup P_\y.
$$
Note that from (\ref{xfromy}) we have
\begin{equation}
\label{zeros-poles-xy}
Z_\x=qZ_\y, \quad P_\x=qP_\y. 
\end{equation}
We also see from the equation (\ref{EdwardsEq61}) that both
$\x$ and $\y$ are rational functions of degree $2$ 
on $E_{r,s}$, hence that each of the sets
$Z_\x$, $P_\x$, $Z_\y$, $P_\y$ 
has the image of cardinality at most $2$
under the projection (\ref{TateUni}).

Basic tools devised in \cite{S14} are the Bruhat-Tits $\Q$-tree $BT(\bK)$
and its completion $\overline{BT}(\bK)$
with ends in $ \mathbb{P}^1(\bK)$:
$$
\overline{BT}(\bK)=BT(\bK)\cup \mathbb{P}^1(\bK).
$$
The group $\GL_2(\bK)$ acts naturally on $\overline{BT}(\bK)$
so that the multiplication by $\xi\in \bK^\times$ on 
$\mathbb{P}^1(\bK)=\bK\cup\{\infty\}$ 
is extended to the action of $\bmatx{\xi}{0}{0}{1}\in\GL_2(\bK)$.
We consider $Z\cup P$ as an infinite subset of $\mathbb{P}^1(\bK)$
(necessarily stable under the action of $\la \pm q^4\ra\subset \bK^\times$) 
and its spanning tree
\begin{equation}
\Gamma_{r,s}:=\bigcup_{z,z'\in Z\cup P} [z,z'] \quad (\subset BT(\bK))
\end{equation}
which has, for every point $z\in Z\cup P$ 
$(\subset \bK^\times\subset  \mathbb{P}^1(\bK))$,
a semi-infinite path to the `end' $z$.
It turns out that $\Gamma_{r,s}$ contains an infinite central road $[0,\infty]$.
The metric structure on the internal edges of $\Gamma_{r,s}$ is determined 
by the rule that, for every 4 points $w,x,y,z\in P\cup Z$, 
the length of the internal edge $[w,x]\cap [y,z]$
is given by $|v_\bK(c(w,x:y,z))|$, the valuation of the cross ratio $c(w,x:y,z)$ defined by
\begin{equation}
\label{crossratio}
c(w,x:y,z)=\frac{(w-y)(x-z)}{(w-z)(x-y)}
\end{equation}
(cf.\,\cite[Lemma 4.2]{S14}).
Now, in regards of the above Tate uniformization 
(\ref{TateUni}), $-1\in \bK^\times$ acts on $\Gamma_{r,s}$
by switching two sides 
of the central line so that the quotient tree 
$\overline{\Gamma}_{r,s}:=\Gamma_{r,s}/\la \pm 1 \ra$
(that corresponds to the projection image of $\Gamma_{r,s}$ by 
$\bK^\times\to \bK^\times$ by
$t\mapsto t^2$) projects onto the dense image in $C(\mathrm{trop}(f_{r,s}))$.
\begin{Lemma}
Suppose that $C(\mathrm{trop}(f_{r,s}))$ is tropically smooth. 
Let $\R\overline{\Gamma}_{r,s}$ is the $\R$-tree naturally extended from 
the $\Q$-tree $\overline{\Gamma}_{r,s}$.
Then the tropical curve $C(\mathrm{trop}(f_{r,s}))$ is isometric to  
$\R\overline{\Gamma}_{r,s}/\la q^8 \ra$.
\end{Lemma}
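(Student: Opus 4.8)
The plan is to reduce the statement to Speyer's reconstruction of genus-one tropical curves \cite[Sect.~7]{S14}, supplying as the only curve-specific input the divisor data of $\x\circ\wp$ and $\y\circ\wp$. First I would locate their zeros and poles on $\bK^\times$. Because $\x$ and $\y$ are degree-$2$ rational functions on $E_{r,s}$, each of $Z_\y, P_\y$ has image of cardinality at most two under (\ref{TateUni}); concretely they are the roots in $t\in\bK^\times$ of the numerator and denominator of the explicit expression (\ref{yyt}), which are theta-type series, and (\ref{zeros-poles-xy}) then gives $Z_\x=qZ_\y$, $P_\x=qP_\y$. Thus $Z\cup P$ is a $\la\pm q^4\ra$-stable subset of $\mathbb{P}^1(\bK)$ that is finite modulo the period lattice, and the spanning tree $\Gamma_{r,s}$, its quotient $\overline{\Gamma}_{r,s}=\Gamma_{r,s}/\la\pm1\ra$, and finally $\R\overline{\Gamma}_{r,s}/\la q^8\ra$ are well-defined metric graphs with the cross-ratio metric of (\ref{crossratio}).

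Next I would make the projection onto $C(\mathrm{trop}(f_{r,s}))$ explicit. Each coordinate extends to a piecewise-linear function on the Berkovich tree $\overline{BT}(\bK)$, and the pair $(v_\bK(\x),v_\bK(\y))$ evaluated along $\Gamma_{r,s}$ is exactly the map whose image is the dense subset of $C(\mathrm{trop}(f_{r,s}))$ described just before the lemma. By the Poincar\'e--Lelong relation on $BT(\bK)$ (equivalently, by the cross-ratio length rule \cite[Lemma~4.2]{S14}), the slope of $v_\bK(\x\circ\wp)$ across an oriented internal edge $e$ equals the difference between the number of ends of $Z_\x$ and of $P_\x$ lying ahead of $e$, and similarly for $\y$. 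These slope vectors are the primitive directions of the candidate tropical edges, while the cross-ratio lengths are their parameter lengths; after quotienting by $\la\pm1\ra$ and $\la q^8\ra$, the infinite central road $[0,\infty]$ descends to a cycle of length $8$ (matching Proposition~\ref{thm1.1}), and the semi-infinite paths toward the ends $z\in Z\cup P$ descend to the unbounded legs.

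Finally I would invoke tropical smoothness. By hypothesis $C(\mathrm{trop}(f_{r,s}))$ is dual to a subdivision of its Newton polygon into triangles of area $\tfrac12$, hence trivalent with every slope vector from the previous paragraph primitive; this forces the piecewise-linear projection $\R\overline{\Gamma}_{r,s}/\la q^8\ra\to C(\mathrm{trop}(f_{r,s}))$ to be unit-speed on each edge and locally injective, so that the cross-ratio length of every edge equals the lattice length of its image. Combined with the surjectivity already observed, this yields the asserted isometry. The hard part will be exactly this last step: without smoothness the projection can fold distinct edges of $\overline{\Gamma}_{r,s}$ onto a common segment in the plane, contracting or identifying lengths, so the delicate point is to verify that tropical smoothness---rather than merely the correct cycle length $8$---is what guarantees that the tree metric is reproduced edge by edge with no folding, and that the count of legs matches the unbounded rays of $C(\mathrm{trop}(f_{r,s}))$.
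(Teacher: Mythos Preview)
Your overall strategy matches the paper's: both reduce to Speyer's construction in \cite[Sect.~7]{S14}, using the divisor data of $\x\circ\wp,\y\circ\wp$ to build $\Gamma_{r,s}$ and then pass to the quotient. The paper's own proof is in fact a two-line remark, so your write-up is considerably more detailed on the Speyer side than what the paper actually records.

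The one substantive difference is the handling of the ``hard part'' you flag at the end. The paper does not argue directly from the unimodular triangulation that the projection is unit-speed and locally injective; instead it invokes \cite[Theorem~5.7]{J20} (Jell's result on smooth, fully faithful tropicalizations of Mumford curves) to conclude that the tropicalization map is an isometry onto $C(\mathrm{trop}(f_{r,s}))$. Your sketch---smooth $\Rightarrow$ trivalent with primitive edge directions $\Rightarrow$ unit-speed, no folding---captures exactly the mechanism behind faithful tropicalization, but turning that intuition into a proof is precisely the content of results like Jell's. In particular, ``primitive slope vectors on the target'' does not by itself rule out that two distinct edges of $\R\overline{\Gamma}_{r,s}/\la q^8\ra$ map onto the same primitive segment, nor that an edge maps with multiplicity; one needs the multiplicity-one statement for smooth tropicalizations, which is what \cite{J20} supplies. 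So your argument is not wrong, but the step you correctly identify as delicate is the one the paper outsources to a citation rather than proving by hand.
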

\begin{proof}
This follows easily from the argument in \cite[Sect. 7]{S14}.
The smoothness assumption is used to apply \cite[Theorem 5.7]{J20}
to see the fully faithfulness of the tropicalization map.
\end{proof}
\subsection{The set $Z$ of zeros of $\x\circ\wp,\y\circ\wp$}
The zeros of the function
\begin{equation}
\label{yy(t)theta}
\y(t)=\frac{\al y -\bal}{-s y+r}
=\frac{\al(q)\bar\theta_3(t,-q^2)-\bal(q)\bar\theta_4(t,-q^2)}{-s(q)\bar\theta_3(t,-q^2)+r(q)\bar\theta_4(t,-q^2)}
\end{equation}
(independent of the choice of $r,s$)
are obtained from the equation 
$\bal/\al=\bar\theta_3(t,-q^2)/\bar\theta_4(t,-q^2)$ in $t$,
which is equivalent to
$$
\prod_{n=1}^\infty \frac{1-q^{2n-1}}{1+q^{2n-1}}
=
\prod_{n=1}^\infty 
\frac{(1-q^{4n-2}t^2)(1-q^{4n-2}t^{-2})}{(1+q^{4n-2}t^2)(1+q^{4n-2}t^{-2})}.
$$
It follows that $Z_\y=\{\pm q^{\pm\frac12+4n}\mid n\in\Z\}$
so that $Z_\y$ has the image of cardinality $2$
under the projection (\ref{TateUni}).
By (\ref{zeros-poles-xy}), we conclude
\begin{equation}
Z=Z_\y\cup qZ_\y=
\{\pm q^{{\color{black}\pm \frac{e}{2} +4n}}\mid n\in\Z
{\color{black},\, e=\pm 1,3}\}.
\end{equation}
\subsection{A smooth square case}
We first consider the case
\begin{equation}
\begin{cases}
r(q)&=1-3q, \\
s(q)&=-1+q.
\end{cases}
\end{equation}
To investigate the set of poles of $\y(t)$, set
$$
\Delta:=-s(q)\bar\theta_3(t,-q^2)+r(q)\bar\theta_4(t,-q^2),
$$
and compare it with a product of functions of the theta form
$$
\Theta_{\xi,a}:=\prod_{\substack{n\in\Z\\ 8n+a>0}}(1+\xi q^{8n+a}t^2)
 \prod_{\substack{n\in\Z\\ 8n+a<0}}(1+\xi^{-1} q^{-8n-a}t^{-2})
\quad (a\in {\color{black} \Q}/8\Z,\ \xi\in\bK^\times).
$$
Since $\y(t)$ has degree 2, we may assume 
$
\Delta\sim \Theta_{\xi,a} \cdot \Theta_{\eta,b}
$
for some $a,b\in\Q$, $\xi,\eta\in\bK^\times$
(where $\sim$ means up to $\bK^\times$) so that 
$P_\y=\{\pm\sqrt{-\xi^{-1}}q^{-\frac{a}{2}+4n}, 
\pm\sqrt{-\eta^{-1}}q^{-\frac{b}{2}+4n}\mid n\in\Z 
\}.$
Comparing the logarithmic derivative 
\begin{align*}
\frac{d}{dt}\log(\Delta)=
&\frac{-2 t^{4}+2}{t^{3}} q^{3}
+\frac{-4 t^{4}+4}{t^{3}} q^{4}
+\frac{-8 t^{4}+8}{t^{3}} q^{5}
+\frac{-2 t^{8}-16 t^{6}+16 t^{2}+2}{t^{5}} q^{6} \\
&+\frac{-8 t^{8}-32 t^{6}+32 t^{2}+8}{t^{5}} q^{7}
+\frac{-20 t^{8}-64 t^{6}+64 t^{2}+20}{t^{5}} q^{8}+
\cdots
\end{align*}
with that of $ \Theta_{\xi,a} \cdot \Theta_{\eta,b}$ successively 
from lower degree terms in $q$, 
we find 
\begin{equation*}
\begin{cases}
&a=3,\ \xi=-(1+2 q +3 q^{2}+10 q^{3}+15 q^{4}+38 q^{5}+51 q^{6}+162 q^{7}+\cdots),\\
&b=5,\ \eta= -(1-2 q +q^{2}-6 q^{3}+14 q^{4}-28 q^{5}+84 q^{6}-232 q^{7}+\cdots),
\end{cases}
\end{equation*}
where we only know $\xi,\eta$ as approximate values of $q$-expansions.
By (\ref{zeros-poles-xy}), we conclude
\begin{equation}
P=P_\y\cup qP_\y=
\{\pm\bar\xi q^{\frac52+4n}, 
\pm\bar\eta q^{\frac32+4n},
\pm\bar\xi q^{\frac72+4n}, 
\pm\bar\eta q^{\frac52+4n}
\mid n\in\Z 
\}
\end{equation}
with 
\begin{equation*}
\begin{cases}
\bar\xi:=\sqrt{-\xi^{-1}}=1-q -3 q^{3}+4 q^{4}-10 q^{5}+\frac{55}{2} q^{6}-\frac{153}{2} q^{7}+\cdots, \\
\bar\eta:=\sqrt{-\eta^{-1}}=1+q +q^{2}+4 q^{3}+3 q^{4}+12 q^{5}+\frac{5}{2} q^{6}+\frac{109}{2} q^{7}+\cdots.
\end{cases}
\end{equation*}
We compute various cross ratios (\ref{crossratio}) and their 
valuations in $v_\bK$. For example, we have:
the length of  $[0,\bar\xi q^{-\frac12}]\cap [\bar\eta q^{\frac52}, \infty]=
v_\bK(c(0,\bar\xi q^{-\frac12}
{\color{black} \colon} \bar\eta q^{\frac52},\infty))=3$;
the length of  $[\bar\xi q^{-\frac12},\bar\xi q^{\frac52}]\cap [\bar\eta q^{\frac52}, \infty]=
v_\bK(c(\bar\xi q^{-\frac12},\bar\xi q^{\frac52}
{\color{black} \colon}\bar\eta q^{\frac52}, \infty))=4$ etc.
\noindent
Eventually, we find the shape of $\Gamma_{r,s}$ 
as in the following picture, where each of the vertical (resp. horizontal) internal edges between
two adjacent $\bullet$'s has length one (resp. $\frac12$). 
\begin{align}
&\Gamma_{r,s}: \\
&\xygraph{
{0 \cdots\ }    ([]!{+(0,-.3)} {}) -^{\leftarrow u} [r]
    \bullet ([]!{+(.3,-.3)} {}) 
      ( - [d] \bullet ([]!{+(.3,0)} {})
(
       - []!{+(-0.5,-0.7)} {} ([]!{+(0,-.3)} {-\bar\xi q^{\frac72}}),
        - []!{+(0.5,-0.7)} {} ([]!{+(0,-.3)} {-q^{\frac72}})
        ),
        - [u] {\bullet} ([]!{+(.3,0)} {})
         (
        - []!{+(0.5,0.7)} {}*+!D{q^{\frac72}} ([]!{+(0,-.3)} {}),
        - []!{+(-0.5,0.7)} {}*+!D{\bar\xi q^{\frac72}} ([]!{+(0,-.3)} {})
)
)
    - [r]
    \bullet ([]!{+(0,-.3)} {}) 
    - [r] 
    \bullet ([]!{+(0,-.3)} {}) (
           - [d] \bullet ([]!{+(.3,0)} {})
(
       - []!{+(-0.5,-0.7)} {} ([]!{+(0,-.3)} {-\bar\xi q^{\frac52}}),
        - []!{+(0.5,-0.7)} {} ([]!{+(0,-.3)} {-\bar\eta q^{\frac52}})
        ),
        - [u] \bullet ([]!{+(.3,0)} {})
         (
        - []!{+(0.5,0.7)} {}*+!D{\bar\eta q^{\frac52}}  ([]!{+(0,-.3)} {}),
        - []!{+(-0.5,0.7)} {}*+!D{\bar\xi q^{\frac52}}  ([]!{+(0,-.3)} {})
)
)            
 - [r]
    \bullet ([]!{+(0,-.3)} {{}^{u=1}}) 
- [r]
    \bullet ([]!{+(0,-.3)} {})  (
        - [d] \bullet ([]!{+(.3,0)} {})
 (
       - []!{+(-0.5,-0.7)} {} ([]!{+(0,-.3)} {-\bar\eta q^{\frac32}}),
        - []!{+(0.5,-0.7)} {} ([]!{+(0,-.3)} {-q^{\frac32}})
        ),
        - [u] \bullet ([]!{+(.3,0)} {})
         (
        - []!{+(0.5,0.7)} {}*+!D{q^{\frac32}}  ([]!{+(0,-.3)} {}),
        - []!{+(-0.5,0.7)} {}*+!D{\bar\eta q^{ \frac32}}  ([]!{+(0,-.3)} {})
)
)            
- [r]
    \bullet ([]!{+(0,-.3)} {})
 - [r]
    \bullet ([]!{+(0,-.3)} {})
      (
                - [d] {}*--D{-q^{\frac12}} ([]!{+(.3,0)}),
                -[u]{}*++D{q^{\frac12}} ([]!{+(.3,0)})
)            
 - [r]
    \bullet ([]!{+(0,-.3)} {{}^{u=0}}) 
- [r]
    \bullet ([]!{+(0,-.3)} {})  (
             - [d] \bullet ([]!{+(.3,0)} {})
(
       - []!{+(-0.5,-0.7)} {} ([]!{+(0,-.3)} {-\bar\xi q^{-\frac12}}),
        - []!{+(0.5,-0.7)} {} ([]!{+(0,-.3)} {-q^{-\frac12}})
        ),
        - [u] \bullet ([]!{+(.3,0)} {})
         (
        - []!{+(0.5,0.7)} {}*+!D{q^{-\frac12}}  ([]!{+(0,-.3)} {}),
        - []!{+(-0.5,0.7)} {}*+!D{\bar\xi q^{-\frac12}}  ([]!{+(0,-.3)} {})
)
)            
       - [r] {\cdots \infty}
}
\notag
\end{align}
\noindent
Noting that $-1\in \bK^\times$ acts by switching the upper and lower sides
of the central line, we see that the quotient of the above tree by $\la \pm 1\ra$
(that corresponds to the projection $\bK^\times\to \bK^\times$ by
$t\mapsto t^2$) forms the following tree:
\begin{align}
&\overline{\Gamma}_{r,s}
\ (\cong 
\Gamma_{r,s}/\la \pm 1\ra): \\
&
\xygraph{
{0 \cdots\ }    ([]!{+(0,-.3)} {}) -^{\leftarrow u} [r]
    \bullet ([]!{+(.3,-.3)} {}) 
     ( 
(   
        ),
        - [u] {\bullet} ([]!{+(.3,0)} {})
         (
        - []!{+(0.5,0.7)} {}*+!D{q^{7}} ([]!{+(0,-.3)} {}),
        - []!{+(-0.5,0.7)} {}*+!D{\bar\xi^2 q^{7}} ([]!{+(0,-.3)} {})
)
)
    - [r]
    \bullet ([]!{+(0,-.3)} {}) 
    - [r] 
    \bullet ([]!{+(0,-.3)} {}) (
        - [u] \bullet ([]!{+(.3,0)} {})
         (
        - []!{+(0.5,0.7)} {}*+!D{\bar\eta^2 q^{5}}  ([]!{+(0,-.3)} {}),
        - []!{+(-0.5,0.7)} {}*+!D{\bar\xi^2 q^{5}}  ([]!{+(0,-.3)} {})
)
)            
 - [r]
    \bullet ([]!{+(0,-.3)} {{}^{u=1}}) 
- [r]
    \bullet ([]!{+(0,-.3)} {})  (
         - [u] \bullet ([]!{+(.3,0)} {})
         (
        - []!{+(0.5,0.7)} {}*+!D{q^{3}}  ([]!{+(0,-.3)} {}),
        - []!{+(-0.5,0.7)} {}*+!D{\bar\eta^2 q^{3}}  ([]!{+(0,-.3)} {})
)
)            
- [r]
    \bullet ([]!{+(0,-.3)} {})
 - [r]
    \bullet ([]!{+(0,-.3)} {})
      (     
        -[u]{}*++D{q} ([]!{+(.3,0)})
)            
 - [r]
    \bullet ([]!{+(0,-.3)} {{}^{u=0}}) 
- [r]
    \bullet ([]!{+(0,-.3)} {})  (
,
        - [u] \bullet ([]!{+(.3,0)} {})
         (
        - []!{+(0.5,0.7)} {}*+!D{q^{-1}}  ([]!{+(0,-.3)} {}),
        - []!{+(-0.5,0.7)} {}*+!D{\bar\xi^2 q^{-1}}  ([]!{+(0,-.3)} {})
)
)            
       - [r] {\cdots \infty}
}
\notag
\end{align}
where the central $u$-line acquires the double metric of the original one while
the other edges keeps the original lengths: Consequently each of the internal edges between
two adjacent $\bullet$'s has length one. 
Thus, after the Tate uniformization
$$
\bK^\times
{\color{black} \overset{\wp}{ {\relbar\joinrel\twoheadrightarrow} }}
\,
\bK^\times/\la \pm q^4\ra 
{\color{black} =E_{r,s}(\bK) 
\underset{t\mapsto t^2}{\overset{\sim}{\longrightarrow}}
\bK^\times/\la q^8\ra,
}
$$ 
we find the tropical curve $C(\mathrm{trop}(f_{r,s}))$ is isometric
to the quotient of the above tree modulo $\la q^8\ra$ 
as shown in the following picture.
\vspace{-2cm}
\begin{figure}[H]
\begin{center}
\includegraphics[width=6cm, bb=0 0 486 466]{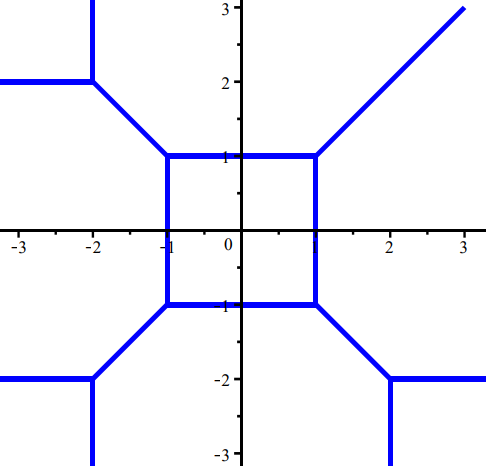}
\end{center}
\end{figure}
\noindent
\subsection{A heptagon case}
We next consider the case
\begin{equation}
\begin{cases}
r(q)&=1+q^{\frac32}, \\
s(q)&=-1+q^{\frac32}.
\end{cases}
\end{equation}
We begin by investigating the set of poles of $\y(t)$ by setting
$$
\Delta:=-s(q)\bar\theta_3(t,-q^2)+r(q)\bar\theta_4(t,-q^2),
$$
and compare it with a product
of theta functions of the form $\Theta_{\xi,a}\Theta_{\eta, b}$.
By analogous consideration to the above square case, 
in this heptagon case, we are led to finding 
$a=\frac72, b=\frac92$ and
comparison of the form
$
\Delta\sim \Theta_{\xi,\frac72} \cdot \Theta_{\eta,\frac92}
$
for some $\xi,\eta\in\bK^\times$
so that 
$$
P_\y=\{\pm\sqrt{-\xi^{-1}}q^{-\frac{7}{4}+4n}, 
\pm\sqrt{-\eta^{-1}}q^{-\frac{9}{\color{black} 4}+4n}\mid n\in\Z 
\}.
$$
Comparing the logarithmic derivative 
\begin{align*}
\frac{d}{dt}\log(\Delta)=
&\frac{2 t^{4}-2}{t^{3}} q^{\frac72}
+\frac{-2 t^{8}+2}{t^{5}} q^{7}
+\frac{ 4t^{8}-4}{t^{5}} q^{8}
+\frac{2 t^{12}+2 t^{8}- 2t^4-2}{t^{7}} q^{\frac{21}{2}} \\
&+\frac{-6 t^{12}-2 t^{8}+2 t^{4}+6}{t^{7}} q^{\frac{23}{2}}
-\frac{2 (t^4-1)(t^4+1)^3}{t^{9}} q^{14}+
\cdots
\end{align*}
with that of $ \Theta_{\xi,a} \cdot \Theta_{\eta,b}$ successively 
from lower degree terms in $q$, 
we find 
\begin{equation*}
\begin{cases}
&a=\frac72,\ \xi=-(1+ q + q^2 +2 q^{3}+2 q^{4} +5 q^{5}+42 q^{6}+131 q^{7}+\cdots),\\
&b=\frac92,\ \eta= 1+q +2 q^2 + 5q^{3}+14 q^{4}+42 q^{5} +132 q^{6}+ 428 q^{7}+\cdots
.
\end{cases}
\end{equation*}
We may skip computing cross ratios since internal edges 
disappear in $\Gamma_{r,s}$ in this case. Consequently
$\overline{\Gamma}_{r,s}$ has
external rays from the central line
\begin{enumerate}
\item[(i)]
for zeros at $\{q^{-1+8\Z},q^{1+8\Z},q^{3+8\Z}\}$, and
\item[(ii)]
for poles at 
$\{
\bar\eta^2q^{\frac{7}{2}+8\Z}, 
\bar\xi^2q^{\frac92+8\Z},
\bar\eta^2q^{\frac{11}{2}+8\Z}, 
\bar\xi^2q^{\frac{13}{2}+8\Z}
\}$,
\end{enumerate}
where $\bar\xi:=\sqrt{-\xi^{-1}}$, $\bar\eta:=\sqrt{-\eta^{-1}}$.
It follows then that $\R\overline{\Gamma}_{r,s}$ looks like the following
tree:
\vspace{-0.5cm}
\begin{figure}[H]
\begin{center}
\begin{tabular}{cc}
\includegraphics[width=12cm]{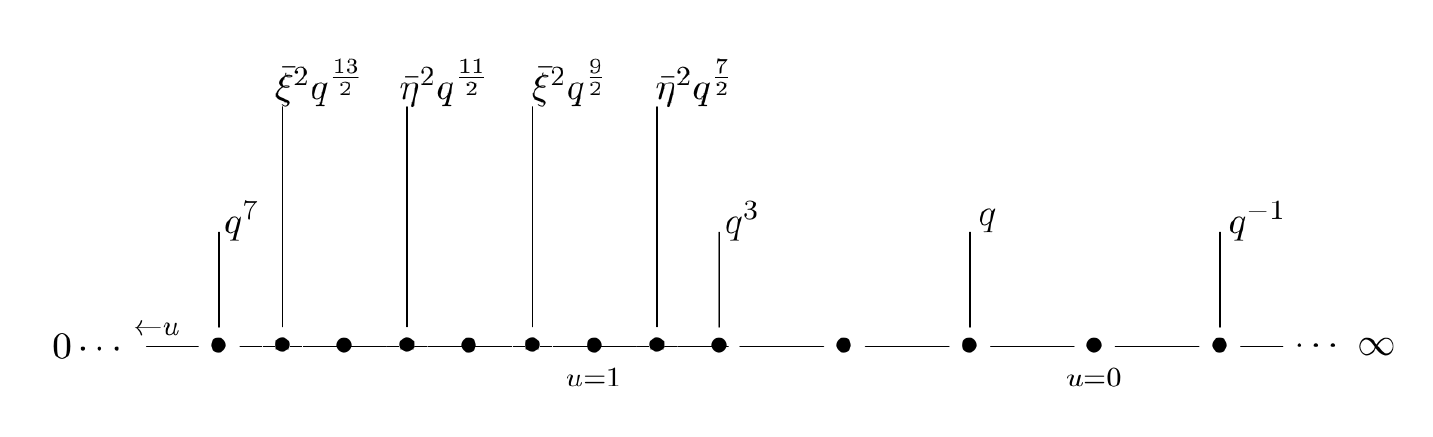} 
\end{tabular}
\end{center}
\end{figure}
\vspace{-1cm}
\noindent
whose quotient modulo $\la q^8\ra$
projects onto the
tropical curve $C(\mathrm{trop}(f_{r,s}))$.
We observe that it is indeed isometric to the following tropical 
curve.
\vspace{-2cm}
\begin{figure}[H]
\begin{center}
\includegraphics[width=6cm, bb=0 0 584 578]{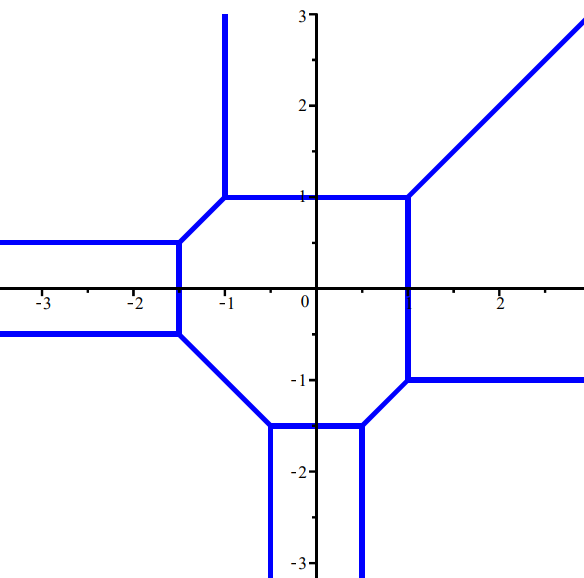}
\end{center}
\end{figure}

\ifx\undefined\bysame
\newcommand{\bysame}{\leavevmode\hbox to3em{\hrulefill}\,}
\fi

\end{document}